\documentclass{article}
\usepackage{amsmath,amssymb,color,graphicx,enumerate,amsthm,setspace,wrapfig,mathtools,adjustbox,tikz,tikz-cd, quiver, braket,bbm, authblk}
\usepackage{hyperref}
\hypersetup{
    colorlinks=true,
    linkcolor=blue,
    filecolor=magenta,      
    urlcolor=cyan,
    pdfpagemode=FullScreen,
    citecolor=blue
    }

\newcommand{\Z}{\mathbb{Z}}
\newcommand{\C}{\mathbb{C}}

\newcommand{\PP}{\mathbb{P}}

\newcommand{\f}{\frac}

\DeclareMathOperator{\csm}{csm}
\DeclareMathOperator{\st}{stair}

\DeclareMathOperator{\Hom}{Hom}
\DeclareMathOperator{\rk}{rk}

\newtheorem{theorem}{Theorem}[section]
\newtheorem{theorem*}{Theorem}
\newtheorem{remark}{Remark}[section]
\newtheorem{corollary}[theorem]{Corollary}

\newtheorem{lemma}{Lemma}[section]
\newtheorem{proposition}{Proposition}[section]

\newtheorem{definition}{Definition}[section]
\newtheorem{example}{Example}[section]

\title{A ``Staircase'' formula for the Chern-Schwartz-MacPherson cycle of a matroid}
\author[1]{Franquiz Caraballo Alba}
\author[2]{Jeffery Liu}
\affil[1,2]{Florida State University}

\date{September 2024}

\begin{document}

\maketitle

\begin{abstract}
    We provide a formula for the Poincar\'e dual of the Chern-Schwartz-MacPherson (CSM) cycle of a matroid in the Chow ring of the matroid. We derive the formula from the $\mathbb{C}$-realizable case and prove that it satisfies a contraction-deletion formula. From this fact, we prove it holds for all matroids, confirming a conjecture of Fife and Rinc\'on.
\end{abstract}

\section{Introduction}

The Chern-Schwartz-MacPherson (CSM) class generalizes the Chern class of the tangent bundle of a variety. In particular, it is defined even when the variety is singular or non-proper. One interpretation of the CSM class of a variety is in terms of a natural transformation between the functor of constructible functions on varieties and the Chow group functor, for more details and history, see this survey by Brasselet \cite{B00}. In the case where $V$ is the complement of a hyperplane arrangement $\mathcal{A}$ in $\PP^d$, a combinatorial formula for $c_{SM}(V)$ in terms of the characteristic polynomial of the underlying matroid was given separately by Aluffi in \cite{A12} and Huh in \cite{H13}.

De Concini and Procesi defined the notion of a wonderful model of a hyperplane arrangement with respect to a building set in \cite{DP95}. In this paper, we will use the term wonderful model of $\mathcal{A}$ to refer to the wonderful model with respect to the maximal building set. Later, Feichtner and Yuzhvinsky \cite{FY04} define the Chow ring $A^*(M)$ of a matroid $M$ following the ideas of \cite{DP95} and the correspondence between the Chow ring of a nonsingular proper toric variety $X$ and Minkowski weights $MW_*(\Sigma)$ on its associated fan $\Sigma$ proven by Fulton and Strumfels in \cite{FS97}. In \cite{AHK}, Adiprasito, Huh and Katz bring these concepts together and prove that $A^*(M)$ and $MW_*(B(M))$ are dual to each other, in analogy to Poincar\'e duality for the toric case and the realizable case.

In their paper \cite{LdMRS19}, L\'opez de Medrano et al. define $\csm(M)$ for a matroid $M$ and prove in Theorem 3.1 that when $M$ is realizable over $\mathbb{C}$, its CSM cycle corresponds to the geometric CSM class $c_{SM}(\mathcal{W} \setminus \mathcal{A})$ of the complement of the arrangement in the wonderful model under the identification of $MW_*(B(M)) \cong A_*(\mathcal{W})$. They also generalize Aluffi and Huh's statement about the relation between the CSM class and the characteristic polynomial to a general matroid. A separate proof of this relationship was provided by Ashraf and Backman in \cite{AB22} using through an understanding of Tutte activities.

However, in the geometric setting $c_{SM}(\mathcal{W} \setminus \mathcal{A})$ should be Poincar\'e dual to some element $c^{SM}(\mathcal{W} \setminus \mathcal{A}) \in A^*(\mathcal{W})$, since $\mathcal{W}$ is proper and nonsingular. In the combinatorial setting, the Minkowski weights on the Bergman fan of $M$ and the Chow ring of $M$ satisfy Poincar\'e duality, so there should be a class $c^{SM}(M) \in A^*(M)$ with $c^{SM}(M) \cap 1_M = \csm(M)$. In Section 3 we prove:

\setcounter{section}{3}

\begin{theorem}
    The CSM class of the complement of the arrangement $\mathcal{A}$ in its wonderful model $\mathcal{W}$ is given in terms of the Chow ring generators by the formula 
    \begin{align*}
        c_{SM}(\mathcal{W} \setminus \mathcal{A}) &= \prod_{r=1}^{\rk(E)}\left(1 - \sum_{\substack{F \in \mathcal{L}\\ \rk(F) \geq r}} x_F\right) \cap [\mathcal{W}],
    \end{align*}
    where $\mathcal{L}$ is the lattice of non-empty flats of the matroid associated to $\mathcal{A}$ and $x_F$ is the element of the Chow ring of $\mathcal{W}$ associated to the flat $F$.
\end{theorem}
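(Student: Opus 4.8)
The plan is to reduce the theorem to a computation of the total Chern class of the logarithmic tangent bundle of $\mathcal{W}$, and then to evaluate that class in the Chow-ring generators $x_F$. By the construction of De Concini and Procesi, $\mathcal{W}$ is smooth and projective and the boundary $D = \mathcal{W} \setminus (\mathcal{W}\setminus\mathcal{A})$ is a simple normal crossings divisor whose irreducible components are the $D_F$, indexed by the proper nonempty flats $F$, with $[D_F] = x_F$. For the complement of an SNC divisor in a smooth complete variety, Aluffi's logarithmic formula identifies the CSM class with the Chern class of the log tangent bundle, so $c_{SM}(\mathcal{W}\setminus\mathcal{A}) = c(T_\mathcal{W}(-\log D)) \cap [\mathcal{W}]$. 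Thus it suffices to establish the identity $c(T_\mathcal{W}(-\log D)) = \prod_{r=1}^{\rk(E)}\bigl(1 - \sum_{\rk(F)\geq r} x_F\bigr)$ in $A^*(\mathcal{W})$.

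To produce the product I would seek a logarithmic analogue of the Euler sequence on $\mathcal{W}$: a short exact sequence $0 \to \mathcal{O}_\mathcal{W} \to \mathcal{E} \to T_\mathcal{W}(-\log D) \to 0$ in which $\mathcal{E}$ is a rank-$\rk(E)$ bundle whose total Chern class splits as $\prod_{r=1}^{\rk(E)}(1 - \sum_{\rk(F)\geq r} x_F)$, with Chern roots $-\sum_{\rk(F)\geq r} x_F$ for $r = 1,\dots,\rk(E)$; the top root vanishes since $E$ is not a proper flat, matching the rank of $T_\mathcal{W}(-\log D) = \rk(E)-1$. Such a sequence forces $c(T_\mathcal{W}(-\log D)) = c(\mathcal{E})$ and gives the theorem. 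The natural source is the map $\pi\colon \mathcal{W} \to \PP^{\rk(E)-1}$: on $\PP^{\rk(E)-1}$ the ordinary Euler sequence exhibits $\mathcal{O}(1)^{\rk(E)}$ with its $\rk(E)$ factors $(1+H)$, and $\mathcal{E}$ should be the modification of $\pi^*\mathcal{O}(1)^{\rk(E)}$ by the exceptional divisors created in the iterated blow-up of the subspaces $P_F = \bigcap_{e\in F} H_e$, taken in order of increasing dimension; the content of the theorem is that the divisor over a flat $F$ twists precisely the factors indexed by $r \leq \rk(F)$, replacing the $r$-th root by $-\sum_{\rk(F')\geq r} x_{F'}$.

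The algebraic heart of the argument is that, once all the hyperplane and exceptional contributions are collected, the resulting expression regroups by rank into $\prod_{r=1}^{\rk(E)}(1 - \sum_{\rk(F)\geq r} x_F)$, each flat $F$ appearing with coefficient $-x_F$ in exactly the first $\rk(F)$ factors. Both families of defining relations of the Chow ring drive this regrouping: the incomparability relations $x_F x_G = 0$ for incomparable $F$ and $G$ force the multilinear expansion of the product to be supported on flags of flats, matching the strata of $D$, while the linear relations $\sum_{F\ni i} x_F = \sum_{F\ni j} x_F$ pin down the common hyperplane class appearing in each factor.

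I expect the principal obstacle to be the bookkeeping along the blow-up tower: verifying that at every stage the center remains transverse to the previously created exceptional divisors, so that the SNC hypothesis and the functoriality of the log tangent bundle persist, and then showing that the accumulated corrections assemble into the clean rank-indexed product rather than an unstructured sum over flats. Equivalently, the delicate step is to construct the bundle $\mathcal{E}$ and its logarithmic Euler sequence and to identify its $r$-th Chern root with the rank-$\geq r$ truncation $\sum_{\rk(F)\geq r} x_F$; granting this, the factorization follows from the incomparability relations by a direct expansion.
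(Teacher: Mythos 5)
Your opening move coincides with the paper's: Aluffi's complement formula (Lemma 3.2 in the paper) identifies $c_{SM}(\mathcal{W}\setminus\mathcal{A})$ with $c(\Omega^1_\mathcal{W}(\log D)^\vee)\cap[\mathcal{W}]$, where $D=\bigcup_F D_F$ is the SNC boundary. The gap is everything after that. Your argument rests entirely on the existence of a short exact sequence $0\to\mathcal{O}_\mathcal{W}\to\mathcal{E}\to T_\mathcal{W}(-\log D)\to 0$ with $c(\mathcal{E})=\prod_{r=1}^{\rk(E)}\bigl(1-\sum_{\rk(F)\geq r}x_F\bigr)$, but you never construct $\mathcal{E}$ or the sequence; you explicitly defer this as ``the delicate step.'' Since, once the complement formula is in place, the identity $c(T_\mathcal{W}(-\log D))=\prod_r\bigl(1-\sum_{\rk(F)\geq r}x_F\bigr)$ \emph{is} the theorem, positing such an $\mathcal{E}$ is circular: you have reduced the statement to an unproved assertion at least as strong as the statement itself.

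Worse, the natural candidate you propose --- $\mathcal{E}$ a direct sum of twists of $\pi^*\mathcal{O}(1)$ by exceptional divisors, so that the $r$-th summand has first Chern class $-\sum_{\rk(F)\geq r}x_F=\pi^*h-\sum_{\text{proper }F,\,\rk(F)\geq r}x_F$ --- provably cannot sit in such a sequence, already in the simplest case. For the Boolean matroid on two elements, $\mathcal{W}=\PP^1$, $D=\{0,\infty\}$, $T_{\PP^1}(-\log D)\cong\mathcal{O}$, and the proposed summands are $\mathcal{O}(-1)$ and $\mathcal{O}(1)$; any map $\mathcal{O}\to\mathcal{O}(-1)\oplus\mathcal{O}(1)$ has zero first component, hence vanishes somewhere, so its cokernel has torsion and is never $\mathcal{O}$. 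Thus $\mathcal{E}$ would have to be a non-split bundle whose total Chern class merely factors this way, and producing it is again the theorem. (A separate error: your remark that ``the top root vanishes since $E$ is not a proper flat'' misreads the statement --- here $\mathcal{L}$ is all non-empty flats including $E$, with $x_E=-\pi^*h\neq 0$; dropping $F=E$ from the sums makes the formula false, e.g.\ it would give $1-2h$ instead of $1$ in the $\PP^1$ example.) The paper sidesteps bundles entirely: it pushes $c(T_{\PP^d})$ through the blow-up tower with Aluffi's blow-up formula (Lemma 3.1), divides by $\prod_F(1+x_F)$ via the complement formula, and then shows the resulting product over flats equals the staircase product by a coefficient comparison --- monomials are supported on flags by the incomparability relations, and restricting to a maximal flag makes the product telescope. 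If you want to rescue your outline, replace the hypothetical log Euler sequence by exactly that Chern-class-level bookkeeping; the ``modification by exceptional divisors'' you gesture at is precisely what the blow-up formula computes, with no exact sequence of sheaves required.
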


\setcounter{section}{1}

In Mannino's master thesis \cite{Man22}, Conjecture 5.2.4, attributed to a private communication between Tara Fife and Felipe Rinc\'on, proposes that the degree $k$ part of $c^{SM}(M)$ is given by
\begin{align*}
    \sum_{|\mathcal{F}|=k} C_{\mathcal{F}} x_{F_{r_1}}\dots x_{F_{r_k}}
\end{align*}
indexed over all size $k$ multisubset chains $\mathcal{F} = \{F_{r_1} \subseteq \dots \subseteq F_{r_k}\}$ of proper non-empty flats of $M$, with coefficients $C_{\mathcal{F}}$ on each monomial 
\begin{align*}
    C_{\mathcal{F}} := (-1)^k \f{\prod_{i = 1}^k (r_i - i + 1)}{\prod_{r = 1}^{\rk(E)} m_\mathcal{F}(r)!} = (-1)^k \f{(r_1) (r_2 - 1) \cdots (r_k - k + 1)}{m_\mathcal{F}(1)! m_\mathcal{F}(2)! \cdots m_\mathcal{F}(\rk(E))!}.
\end{align*}
where $r_i = \rk(F_{r_i})$ and $m_\mathcal{F}(r)$ counts the multiplicity of $r$ among $\{r_1, \dots, r_k\}$.

Theorem \ref{thm:csm_realizable} proves this conjecture in the case of matroids realizable over $\mathbb{C}$ and Theorem \ref{thm:st=csm} generalizes the answer to arbitrary matroids.

In Section 2 we recall the required combinatorial and algebro-geometric background for the rest of the paper. In Section 3 we prove that our formula does indeed compute $c^{SM}(\mathcal{W} \setminus \mathcal{A})$ in the case where $M$ is realized by the hyperplane arrangement $\mathcal{A}$. In Section 4 we define 
\begin{align*}
    \st(M) := \prod_{r=1}^{\rk(E)}\left(1 - \sum_{\substack{F \in \mathcal{L}\\ \rk(F) \geq r}} x_F\right) \in A^*(M),
\end{align*}
the staircase class of a matroid, which generalizes the formula provided in Section 3 to the case where $M$ is an arbitrary matroid. Interestingly, the staircase class satisfies a contraction-deletion formula as mentioned in Theorem \ref{prop:stair_cd}, similar to that satisfied by Chern-Schwartz-MacPherson cycles. Since the proof of Theorem \ref{prop:stair_cd} is rather involved, the proof is contained in Section \ref{sec5} instead of immediately after the statement of the theorem. The name ``staircase" comes from the visual structure of the class when written out explicitly; in the case where $M = U_{3,3}$ with underlying set $E = \{0,1,2\}$,
\begin{alignat*}{5}
    \st(M) = & (1 & - x_0 - x_1 - x_2 & - x_{01} - x_{02} - x_{12} & - x_{012}) \\
    \cdot & (1 &  & - x_{01} - x_{02} - x_{12} & - x_{012}) \\
    \cdot & (1 &  &  & - x_{012})
\end{alignat*}

Finally, we prove
\setcounter{section}{4}
\setcounter{theorem}{1}

\begin{theorem}\label{thm:st=csm}
    For an arbitrary matroid $M$, $\st(M) \cap 1_M = \csm(M)$.
\end{theorem}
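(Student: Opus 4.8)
The plan is to prove the identity by induction on the size $n = \abs{E}$ of the ground set, using the deletion-contraction recursion as the engine. Both sides of the asserted equality, $\st(M) \cap 1_M$ and $\csm(M)$, are elements of $MW_*(B(M))$, and the argument reduces the problem to two ingredients: a base case on which the equality can be checked directly, and a recursion obeyed by both sides that lets the equality propagate from proper minors up to $M$. The logical skeleton is the standard one: if two matroid invariants satisfy the \emph{same} deletion-contraction relation and agree on all terminal (non-reducible) matroids, they agree everywhere.

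For the base case I would take the matroids on which the recursion cannot reduce, namely those with no element that is neither a loop nor a coloop; in the loopless setting these are exactly the free matroids $U_{n,n}$, which are realizable over $\C$ (the Boolean arrangement of coordinate hyperplanes), matching the $U_{3,3}$ example already worked out in the introduction. For any realizable $M$, Theorem \ref{thm:csm_realizable} identifies the geometric class $c_{SM}(\mathcal{W} \setminus \mathcal{A})$ with the staircase formula, so that $\st(M) \cap 1_M$ is the image of $c_{SM}(\mathcal{W}\setminus\mathcal{A})$ under the identifications $A^*(M) \cong A^*(\mathcal{W})$ and $MW_*(B(M)) \cong A_*(\mathcal{W})$, with $1_M$ corresponding to $[\mathcal{W}]$. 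By Theorem 3.1 of \cite{LdMRS19}, this same geometric class corresponds to $\csm(M)$. Hence $\st(M) \cap 1_M = \csm(M)$ on all terminal matroids.

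For the inductive step, suppose $M$ has an element $e$ that is neither a loop nor a coloop; then $M \setminus e$ and $M / e$ both have ground sets of size $n-1$, so the inductive hypothesis gives $\st(M \setminus e) \cap 1_{M\setminus e} = \csm(M \setminus e)$ and $\st(M / e) \cap 1_{M/e} = \csm(M / e)$. The CSM cycles of \cite{LdMRS19} satisfy a deletion-contraction recursion writing $\csm(M)$ in terms of $\csm(M \setminus e)$ and $\csm(M / e)$ through the natural pushforward and pullback maps between the Chow rings of $M$, $M \setminus e$, and $M / e$; Theorem \ref{prop:stair_cd} asserts that $\st(M)$ obeys the same recursion. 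Applying both recursions and substituting the inductive hypothesis on the minors yields $\st(M) \cap 1_M = \csm(M)$, closing the induction.

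The main obstacle is verifying that the two recursions genuinely coincide. Theorem \ref{prop:stair_cd} supplies the deletion-contraction formula for $\st$ and is taken as given here, its proof being the involved content of Section \ref{sec5}; but one must still confirm that every map entering it — the pushforwards and pullbacks relating $A^*(M)$, $A^*(M \setminus e)$, and $A^*(M / e)$, together with their compatibility with the cap products against $1_{M\setminus e}$ and $1_{M/e}$ — matches the corresponding data in the CSM recursion of \cite{LdMRS19} termwise, not merely in flavor. A secondary point demanding care is that $M / e$ and $M \setminus e$ need not be simple even when $M$ is, and contraction can introduce parallel classes; the recursion and the base-case reduction must therefore be stated for arbitrary (possibly non-simple) matroids, and one must check that the terminal objects reached by the recursion always lie in the realizable base case, which is precisely what makes the induction well-founded.
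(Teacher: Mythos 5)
Your proposal follows the same route as the paper's own proof: induction on $\abs{E}$, with the all-coloop (Boolean) matroids as the realizable base case settled by Theorem \ref{thm:csm_realizable} together with Theorem 3.1 of \cite{LdMRS19}, and the inductive step driven by the two deletion-contraction formulas, Theorem \ref{prop:stair_cd} for $\st$ and Theorem 5.4 of \cite{LdMRS19} for $\csm$. The side issues you raise (minors need not be simple, contraction can create loops) are indeed harmless: both invariants vanish by convention on matroids with loops, and the inductive hypothesis is taken over \emph{all} matroids on $n$ elements, so such minors are covered trivially.

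However, the point you defer --- confirming that the maps in the two recursions ``match termwise, not merely in flavor'' --- is not routine bookkeeping; it is genuine mathematical content, and leaving it unproved is the one real gap in your argument. The recursion of Theorem \ref{prop:stair_cd} lives in $A^*(M)$ and is phrased through the ring map $\bar{\delta}^*$ and the element $y_i$, whereas the recursion of \cite{LdMRS19} lives in $MW_*(B(M))$ and is phrased through the tropical pullback $\delta^*$ along the modification $B(M) \to B(M \setminus i)$ and the pushforward $j_*: MW_*(B(M/i)) \to MW_*(B(M \setminus i))$. To transfer one into the other under $\_ \cap 1$, you need precisely two statements, which the paper isolates and proves in Section 4.2: (i) $\bar{\delta}^*(y) \cap 1_M = \delta^*(y \cap 1_{M\setminus i})$ for all $y \in A^*(M \setminus i)$ (Lemma \ref{lemma:e=d}), proved by reducing to Courant functions and checking values on rays, using $\delta(e_{F \cup i}) = \delta(e_F) = \bar{e}_F$; and (ii) $j_*(z \cap 1_{M/i}) = y_i \cdot \iota(z) \cap 1_{M \setminus i}$ for all $z \in A^*(M/i)$, proved by computing the rational function $\phi$ of the tropical modification via Proposition 3.10 of \cite{FR10} --- it takes the value $-1$ exactly on rays $e_F$ with $F \notin \mathcal{L}(M)$ and $F \cup i \in \mathcal{L}(M)$, and $0$ on all other rays --- and recognizing that this function is the class $y_i$, so that $\phi \cdot \_ = y_i \cap \_$. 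Without (i) and (ii) the chain of equalities in your inductive step cannot be written down; with them, your argument becomes exactly the paper's proof. So the skeleton is right, but completing it requires supplying these two compatibility lemmas rather than flagging them as an obstacle.
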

The proof of this statement uses the contraction-deletion formula, the interpretation of the Chow ring in terms of piecewise polynomials \cite{B96} and induction on the number of elements of $M$, which completes the proof of Conjecture 5.2.4, Corollary 7.0.1 and Proposition 7.0.4 in \cite{Man22}.

\textbf{Other work:} In \cite{R23}, Rau provides a formula for the CSM cycle of a matroid $M$ in terms of the tropical polynomials cutting out the Bergman fan $1_M$, which is similar to Lemma \ref{lemma:complement_formula} if we interpret the tropical polynomials as the sections whose zero loci are the components of the pullback of the hyperplane arrangement in the realizable case. In \cite{BEST23}, Berget, Eur, Spink and Tseng construct the framework of tautological classes of matroids, which generalizes the class of the universal subbundle and quotient bundle of the Grassmannian to an arbitrary matroid. Through this framework they obtain a formula for the inclusion of the group of Minkowski weights of a matroid $M$ into the Chow ring of the corresponding permutahedral variety and show that the Poincar\'e dual of $\csm(M)$ in the Chow ring of the permutahedral variety is $c(\mathcal{S}_M)c_{|E|-\rk(M)}(\mathcal{Q}_M)$. Although our proof of Theorem \ref{thm:st=csm} does not use this framework, our result translates to the fact that for classes in $c_{|E|-\rk(M)}(\mathcal{Q}_M) A^*(X_E)$, $c(\mathcal{S}_M)$ behaves as if its Chern roots are $\left\{- \sum_{\substack{F \in \mathcal{L}\\ \rk(F) \geq r}} z_F | 1 \leq r \leq \rk(M)\right\}$ in $A^*(X_E)$, via appendix III in \cite{BEST23}. For specific details, see Section \ref{sec4.3}.

\setcounter{section}{1}
\setcounter{theorem}{0}
\section{Background}

\begin{definition}[Matroid] \label{def:matroid}
    A matroid $M$ is a finite set $E$, together with a non-negative interger valued rank function on its subsets $\rk: 2^E \rightarrow \Z^{\geq 0}$, satisfying the properties:
    \begin{itemize}
        \item for all subsets $A \subseteq E$, $\rk(A) \leq |A|$.
        \item for all $A,B \subseteq E$, if $A\subseteq B$, then $\rk(A) \leq \rk(B)$.
        \item for all $A,B \subseteq E$, $\rk(A\cup B) + \rk(A \cap B) \leq \rk(A) + \rk(B)$
    \end{itemize}
    A subset $F \subseteq E$ is called a flat of the matroid if for all $i \in E \setminus F$, $\rk(F) < \rk(F \cup \{i\})$, or equivalently if it is maximal with respect to inclusion among other subsets of the same rank $\rk(F)$.
    The set of all flats forms a lattice under inclusion. The set of all non-empty flats will be denoted by $\mathcal{L}(M)$, and the set of all proper, non-empty flats will be denoted by $\hat{\mathcal{L}}(M)$. We will also write just $\mathcal{L}$ and $\hat{\mathcal{L}}$ to save space if the matroid is understood.
\end{definition}

\begin{definition}[Loops, Coloops, Simple]
    Given a matroid $M=(E,\rk)$,
    \begin{itemize}
        \item an element $i \in E$ is a loop if $\rk(i) = 0$
        \item an element $i \in E$ is a coloop if every minimal set with rank $\rk(E)$ contains $i$. In other words, $i$ is a coloop if every basis of $M$ contains $i$.
        \item A matroid $M=(E,\rk)$ is simple if its rank $1$ flats are precisely the singletons $\{i\}$, for all $i \in E$.
    \end{itemize}
\end{definition}

\begin{definition}[$\C$-realizable]\label{def:Crealizable}
    Given a collection of hyperplanes $\mathcal{A} = \{H_0, \dots, H_n\}$ in $\PP^d_\C$ (or hyperplanes through the origin in $\C^{d+1}$), the function $\rk: 2^{\{0, \dots, n\}} \rightarrow \Z^{\geq 0}$ defined by $\rk(S) = \mathrm{codim}( \bigcap_{i \in S} H_i)$ is a matroid on the ground set $\{0, \dots, n\}$.

    Given a set of vectors $\{v_0, \dots, v_n\}$ in $\C^{d+1}$, the rank function $\rk(S) = \dim(\mathrm{span}\{v_i \mid i \in S\})$ defines a matroid. This yields the same matroid as the previous example by taking the normal hyperplane to each vector.

    Matroids arising in this way from a hyperplane arrangement in $\PP^d_\C$ (or a vector arrangement in $\C^{d+1}$) are called $\C$-realizable. Note that there exist matroids which are not realizable.
\end{definition}

In the realizable case, if $M = (\{0,\dots,k\},\rk)$ is represented by the set of vectors $\{v_0, \dots, v_k\}$ in $\mathbb{C}^n$, then $M \setminus i$ is represented by the set of vectors $\{v_0, \dots, v_k\} \setminus \{v_i\}$ in $\mathbb{C}^n$ and $M / i$ is represented by $\{v_0 + \braket{v_i}, \dots, v_k + \braket{v_i}\} \setminus \{\braket{v_i}\}$ in $\mathbb{C}^n / \braket{v_i}$.
    
\begin{remark}
    Given a finite set $E$, the matroid $M = (E,|\_|)$ is realizable over $\mathbb{C}$ by $\mathcal{A} = \{V(x_i) | i \in E\}$ in $\PP(\mathbb{C}^E)$.
\end{remark}

\begin{example}
    \label{ex:quasitrivial3}
    Consider the arrangement of four lines in $\PP^2$:
    \begin{align*}
        H_0 &= V(x_0)\\
        H_1 &= V(x_1)\\
        H_2 &= V(x_1 - x_2)\\
        H_3 &= V(x_2)
    \end{align*}
    \begin{figure}
        \centering
        \begin{tikzpicture}
            \draw (-2,0) node[left] {$H_0$} -- (2,0);
            \draw (-2,-2) -- (1,4) node[above] {$H_3$};
            \draw (0,-2) -- (0,4) node[above] {$H_2$};
            \draw (2,-2) -- (-1,4) node[above] {$H_1$};
        \end{tikzpicture}
        \caption{Line Arrangement in $\PP^2$.}
        \label{fig:arrangementExample}
    \end{figure}
    This arrangement is depicted in Figure \ref{fig:arrangementExample} as viewed from the chart $x_0 + x_1 + x_2 \neq 0$.
    The rank function of the corresponding realizable matroid has values
    \begin{align*}
        \rk(\emptyset)&=0\\
        \rk(0)=\rk(1)=\rk(2)=\rk(3)&=1\\
        \rk(01)=\rk(02)=\rk(03)=\rk(12)=\rk(13)=\rk(23)=\rk(123)&=2\\
        \rk(012)=\rk(013)=\rk(023)=\rk(0123)&=3
    \end{align*}
    and its flats has the lattice structure
\[\begin{tikzcd}
	& 0123 \\
	01 & 02 & 03 & 123 \\
	0 & 1 & 2 & 3 \\
	& \emptyset
	\arrow[from=2-1, to=1-2]
	\arrow[from=2-2, to=1-2]
	\arrow[from=2-3, to=1-2]
	\arrow[from=2-4, to=1-2]
	\arrow[from=3-1, to=2-1]
	\arrow[from=3-1, to=2-2]
	\arrow[from=3-1, to=2-3]
	\arrow[from=3-2, to=2-1]
	\arrow[from=3-2, to=2-4]
	\arrow[from=3-3, to=2-2]
	\arrow[from=3-3, to=2-4]
	\arrow[from=3-4, to=2-3]
	\arrow[from=3-4, to=2-4]
	\arrow[from=4-2, to=3-1]
	\arrow[from=4-2, to=3-2]
	\arrow[from=4-2, to=3-3]
	\arrow[from=4-2, to=3-4]
\end{tikzcd}\]
\end{example}

For an in depth look at matroids, we reference \cite{W86}.

\begin{definition}[Wonderful model]\label{def:wonderfulmodel}
    Let $\mathcal{A} = \{H_0, \dots, H_n\}$ be hyperplane arrangement in $\PP^d$. For every flat $F$ in the matroid associated to $\mathcal{A}$, there is a linear subvariety corresponding to the intersection $L_F = \cap_{i \in F} H_i$. 
    For an arbitrary arrangement $\mathcal{A}$, its De Concini-Procesi wonderful model $\mathcal{W}$ is a variety constructed from $\PP^d$ by a sequence of blow-ups at each the intersections by order of dimension: 
    \begin{itemize}
        \item First blow up the points $\{L_F\mid F \in \hat{\mathcal{L}}, \rk(F) = d\}$.
        \item Then blow up the proper transform of the lines $\{L_F\mid F \in \hat{\mathcal{L}}, \rk(F) = d-1\}$.
        \item Continue in this manner, at each step $i = 0, \dots, d-1$, blowing up the proper transform of the $i$ dimensional linear subspaces $\{L_F\mid F \in \hat{\mathcal{L}}, \rk(F) = d-i\}$.
    \end{itemize}
    
    Let $\pi: \mathcal{W} \rightarrow \PP^d$ denote this blow-up map. $\pi$ is an isomorphism on the complement of the arrangement $C(\mathcal{A})$. There is a correspondence between the proper non-empty flats of the associated matroid to $\mathcal{A}$ and special divisors of $\mathcal{W}$, specifically, those that are (proper transforms of) exceptional divisors of the blowups described in the definition of $\mathcal{W}$. All collections of these divisors intersect transversely if their flats are comparable, and are disjoint otherwise. See \cite{DP95} for details.
\end{definition}

\begin{definition}[Chow Ring of a Matroid] \label{def:CRmatroid}
    Given a matroid $M$ on a ground set $E$, we associate its Chow ring 
    \begin{align*}
        A^*(M) := \f{\Z[x_F \mid F \in \hat{\mathcal{L}}(M)]}{(I_M + J_M)},
    \end{align*}
    where $I_M$ and $J_M$ are the ideals
    \begin{align*}
        I_M &:= (\{x_Fx_{F'} \mid \text{$F$ and $F'$ are not comparable} \})\\
        J_M &:= \left( \left\{\sum_{\substack{F \in \hat{\mathcal{L}} \\ F \ni i}} x_F - \sum_{\substack{F \in \hat{\mathcal{L}}\\ F \ni j}} x_F\mid i,j \in E\right\} \right).
    \end{align*}
    An equivalent presentation over all non-empty flats is 
    \begin{align*}
        A^*(M) := \f{\Z[x_F \mid F \in \mathcal{L}(M)]}{(I_M + J_M)}
    \end{align*}
    where $I_M$ and $J_M$ are the ideals
    \begin{align*}
        I_M &:= (\{x_Fx_{F'} \mid \text{if $F$ and $F'$ are not comparable} \})\\
        J_M &:= \left( \left\{\sum_{\substack{F \in \mathcal{L} \\ F \ni i}} x_F\mid i \in E\right\} \right)
    \end{align*}
    note that the extra generator $x_E$ equals $$x_E = -\sum_{\substack{F \in \hat{\mathcal{L}} \\ F \ni i}} x_F$$ for all $i \in E$. This is analogous to the presentation of the Chow ring of the permutahedral variety mentioned in Appendix III.1 of \cite{BEST23}.
\end{definition}

When a matroid is $\C$-realizable, arising from a hyperplane arrangement $\mathcal{A}$, its Chow ring is isomorphic to the Chow ring of the wonderful model $A^*(\mathcal{W})$ in the geometric sense, with each $x_F$ corresponding to the class of the divisor associated to the proper flat $F$, and $x_E = -\pi^{*}c_1(\mathcal{O}_{\PP^d}(1))$, minus the pullback of the hyperplane class in $\PP^d$.

\begin{example}
    Let $M$ be the matroid in Example \ref{ex:quasitrivial3}. Its Chow ring is presented by the quotient ring
    \begin{align*}
        \f{\Z[x_0, x_1, x_2, x_3, x_{01}, x_{02},x_{03},x_{123},x_{0123}]}{(I_M + J_M)}
    \end{align*}
    with relations given by the ideals
    \begin{align*}
        I_M = (&x_0x_1, x_0x_2, x_0x_3, x_0x_{123}, x_1x_2, x_1x_3, x_1x_{02}, x_1x_{03}, x_2x_3, x_2x_{01}, x_2x_{03},\\ 
        &x_3x_{01},x_3x_{02},x_{01}x_{02}, x_{01}x_{03}, x_{01}x_{123},
        x_{02}x_{03}, x_{02}x_{123},
        x_{03}x_{123})\\
        J_M = (&x_0 + x_{01} + x_{02} + x_{03} + x_{0123}, x_1 + x_{01} + x_{123} + x_{0123},\\ &x_2 + x_{02} + x_{123} + x_{0123}, x_3 + x_{03} + x_{123} + x_{0123})
    \end{align*}
\end{example}

\begin{definition}[Bergman Fan]
    Let $E$ be a finite set and for $S \subset E$ define $e_S$ be the element of $\mathbb{R}^E$ given by $e_S(i) = 1$ for all $i \in S$. Given a matroid $M = (E, \rk)$, we define $\hat{B}(M)$, the affine Bergman fan of $M$ as the fan in $\mathbb{R}^E$ whose cones are of the form
    \begin{align*}
        \hat{\sigma}_\mathcal{F} := \left\{ \sum_{i = 1}^k a_ie_{F_i} \mid a_i \geq 0 \right\},
    \end{align*}
    where $\mathcal{F} = \{\emptyset \subsetneq F_1 \subsetneq \dots \subsetneq F_k \subsetneq E\}$ is a flag of non-empty flats of $M$, containing the maximal flat $E$. We define the image of this fan under the projection $\mathbb{R}^E \to \mathbb{R}^E / \mathbb{R} e_E$ to be $B(M)$, the Bergman fan of $M$. Note that this projection maps $\hat{\sigma}_{\mathcal{F} \cup \{E\}}$ to the cone $\sigma_\mathcal{F} = {\left\{ \sum_{i = 1}^k a_ie_{F_i} + \mathbb{R}e_E \mid a_i \geq 0 \right\}}$, where $\mathcal{F}$ is a flag of proper non-empty flats.
\end{definition}

The following definitions of Chow ring and Minkowski weights of a fan $\Sigma$ are explored in detail in Chapter $5$ of \cite{AHK}. We reproduce the basic definitions and facts for the reader:

\begin{definition}[Chow Ring of a Fan]
    Given a unimodular polyhedral fan $\Sigma$ in a latticed vector space $N_\mathbb{R} = N \otimes_\mathbb{Z} \mathbb{R}$ we define the Chow ring of $\Sigma$ to be
    \begin{align*}
        A^*(\Sigma) := & \mathbb{Z}[\{x_\rho | \rho \in \Sigma(1)\}]/(I_\Sigma + J_\Sigma),
    \end{align*}
    where $I_\Sigma$ consists of monomials $x_{\rho} x_{\rho'}$ with $\rho, \rho' \in \Sigma(1)$ such that there is no cone $\sigma$ in $\Sigma$ such that $\rho$ and $\rho'$ are both codimension 1 subcones of $\sigma$ and $J_\Sigma$ consists of linear forms
    \begin{align*}
        \sum_{\rho \in \Sigma(1)} \braket{m,v_\rho} x_\rho,
    \end{align*}
    where $m \in \Hom_\mathbb{Z}(N, \mathbb{Z})$ and $v_\rho$ is the generator of $\rho$ in $N$.
\end{definition}

\begin{remark}\label{rem:iso_affine_not_affine}
    In the specific case where $\Sigma$ is the (affine) Bergman fan of a matroid $M$,
    \begin{align*}
        A^*(\hat{B}(M)) \cong A^*(B(M)) \cong A^*(M)
    \end{align*}
    by the isomorphisms given by $x_{\hat{\sigma}_{F}} \mapsto x_{\sigma_{F}} \mapsto x_F$, for $F \neq E$ and $x_{\hat{\sigma}_E} \mapsto - \sum_{F \ni i} x_{\sigma_F}$.
\end{remark}

\begin{definition}[Weighted Fans/Minkowski Weights]
    A weighted fan in $N_\mathbb{R}$ is a polyhedral fan $\Sigma$, of pure dimension $k$, and a weight function $\omega:\Sigma(k) \to \mathbb{Z}$. We say that $(\Sigma, \omega)$ is balanced if for all $\tau \in \Sigma(k-1)$,
    \begin{align*}
        \sum_{\sigma > \tau} \omega(\sigma) v_{\sigma / \tau} \in \braket{\tau},
    \end{align*}
    where $v_{\sigma / \tau}$ is the generator of $\sigma + \braket{\tau}$ in $N_\mathbb{R} / \braket{\tau}$. Given a fan $\Sigma$ of pure dimension $n$, we define $MW_K(\Sigma)$, the group $k$-dimensional Minkowski weights on $\Sigma$ to be the set of balanced weighted subfans of $\Sigma$ with addition given by addition on the weight functions. We define $MW_*(\Sigma)$ as the direct sum of the groups of $k$-dimensional Minkowski weights.
\end{definition}

\begin{remark}[Poincaré Duality]\label{rem:PD}
    Given a pure $n$-dimensional, balanced, unimodular fan $\Sigma$ in $N_\mathbb{R}$, $A^k(\Sigma) \cong MW_{n - k}(\Sigma)$. We denote the isomorphism by $\_ \cap 1_\Sigma$, where $1_\Sigma$ is the function $\sigma \mapsto 1$ for all $\sigma \in \Sigma(n)$. In the case where $\Sigma = B(M)$ for some matroid $M$, we simply write $1_M$ for $1_{B(M)}$. In particular, Remark \ref{rem:iso_affine_not_affine} and the statements above imply that for all matroids $M$, $MW_*(\hat{B}(M)) \cong MW_*(B(M))$.
\end{remark}

\begin{remark}
    In the case where $\Sigma$ is the fan associated to a proper, nonsingular toric variety $X(\Sigma)$, $A^*(\Sigma) \cong A^*(X(\Sigma))$ and $A_*(X(\Sigma)) \cong MW_*(\Sigma)$, see \cite{FS97} for details.
\end{remark}

\section{A ``Staircase" CSM Class Formula for Realizable Matroids}
The Chern-Schwartz-MacPherson (CSM) class generalizes the Chern class of the tangent bundle of a variety. More explicitly, let $Var_\mathbb{C}$ be the category of varieties with proper maps as morphisms and $Ab$ be the category of abelian groups.

\begin{proposition}
    There exists a unique natural transformation $c_*:\mathcal{C}_* \to A_*$ such that for a proper, nonsingular variety $V$, $c_*(\mathbbm{1}_V) = c(TV) \cap [V]$ where $\mathcal{C}_*: Var_\mathbb{C} \to Ab$ is the functor of constructible functions and $A_*: Var_\mathbb{C} \to Ab$ is the Chow group functor. See Theorem 1 in \cite{M74} for details.
\end{proposition}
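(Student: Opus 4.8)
The plan is to follow MacPherson's original strategy \cite{M74}, which rests on two auxiliary constructions: the Chern--Mather class and the local Euler obstruction. First I would define the \emph{Chern--Mather class} of a possibly singular variety $V$. Taking the Nash blowup $\nu \colon \tilde{V} \to V$, which carries a tautological \emph{Nash bundle} $\tilde{T}$ restricting to $TV$ over the smooth locus, I set $c_M(V) := \nu_*\!\left(c(\tilde{T}) \cap [\tilde{V}]\right)$ in $A_*(V)$. When $V$ is nonsingular the Nash blowup is trivial, so this reduces to $c(TV) \cap [V]$; thus the desired normalization is already built in at the level of the Mather class.

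The second ingredient is the \emph{local Euler obstruction} $\mathrm{Eu}_V$, a $\Z$-valued constructible function on $V$ obtained by integrating the first Chern class of the Nash bundle against cycles lying in the fibers of $\nu$. The key combinatorial fact I would establish is that for any variety $X$ the functions $\{\mathrm{Eu}_W : W \subseteq X \text{ a subvariety}\}$ form a basis of the group $\mathcal{C}_*(X)$ of constructible functions. This holds because $\mathrm{Eu}_W$ takes the value $1$ at smooth points of $W$ and vanishes off $\overline{W}$, so the transition matrix relating it to the indicator basis $\{\mathbbm{1}_W\}$ is unitriangular for the partial order given by inclusion of closures, hence invertible over $\Z$.

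With these in hand I would \emph{define} $c_*$ on the basis by $c_*(\mathrm{Eu}_W) := \iota_* c_M(W)$, where $\iota \colon \overline{W} \hookrightarrow X$, and extend $\Z$-linearly. The entire content of the statement then reduces to proving \textbf{naturality}: that for a proper morphism $f \colon X \to Y$ the square relating $f_*$ on constructible functions (pushforward weighted by the Euler characteristic of fibers) to $f_*$ on Chow groups commutes. This is the main obstacle. I would attack it by reducing, via the basis above together with resolution of singularities, to the case of a resolution $f \colon \tilde{V} \to V$ applied to $\mathbbm{1}_V$; the required identity, expressing $f_*\!\left(c(T\tilde{V}) \cap [\tilde{V}]\right)$ in terms of $c_M(V)$ plus contributions from the singular strata, is then extracted from MacPherson's \emph{graph construction}. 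There one degenerates the graph of a generic section of (the dual of) $\tilde{T}$ along a one-parameter family and analyzes the limit cycle, whose localized contributions to the Chern class record precisely how the Mather class transforms under pushforward. Showing that these correction terms assemble into pushforwards of Euler obstructions is the technically demanding step.

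Finally, \textbf{uniqueness} is comparatively soft. Any natural transformation meeting the normalization is forced on every $\mathbbm{1}_V$: resolving $V$ by a proper map from a nonsingular variety and invoking naturality expresses $c_*(\mathbbm{1}_V)$ in terms of the already-determined values on smooth proper varieties, by descending induction on dimension with an inclusion--exclusion over the singular locus. Since the indicator functions $\mathbbm{1}_W$ span $\mathcal{C}_*(X)$, this determines $c_*$ completely, which is what the proposition asserts.
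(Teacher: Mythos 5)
Your proposal is essentially MacPherson's original proof --- Chern--Mather classes via the Nash blowup, the local Euler obstruction basis with its unitriangular transition matrix, naturality via the graph construction, and uniqueness by resolution of singularities --- which is precisely the argument the paper invokes, since the paper offers no proof of its own beyond the citation to Theorem 1 of \cite{M74}. The only caveat worth noting is that MacPherson's theorem lands in homology, whereas the proposition as stated uses the Chow group functor $A_*$; carrying the construction over to Chow groups is a refinement due to Fulton and Kennedy, but this gloss is shared by the paper itself.
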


\begin{definition}[CSM class]
    For a (possibly singular and/or non-proper) subvariety $X$ embedded in a nonsingular proper variety $V$, define $c_{SM}(X) = c_*(\mathbbm{1}_X) \in A_*(V)$.
\end{definition}

L\'opez de Medrano, Rinc\'on, and Shaw defined an analogous CSM cycle for matroids, and proved that when a matroid is realizable over $\mathbb{C}$, its CSM cycle corresponds to the geometric CSM class $c_{SM}(\mathcal{W} \setminus \mathcal{A})$ of the complement of the arrangement in the wonderful model under the identification of $A_*(M) \cong A_*(\mathcal{W})$ (Theorem 3.1 in \cite{LdMRS19}).

A formula for the CSM cycle of a matroid in terms of the generators of the Chow ring was conjectured by Fife, T. and Rincón, F in private communication, according to Conjecture 5.2.4 in \cite{Man22}.

In this section, we prove such a formula for the CSM class of the complement of the arrangement by geometric means, thus resolving the conjecture for all $\mathbb{C}$-realizable matroids.

In the next section, we extend this formula to all matroids, and prove that it gives the same CSM cycle as defined by L\'opez de Medrano, Rinc\'on, and Shaw.

To proceed, we utilize some geometric formulas for the CSM class due to Aluffi:
\begin{lemma} [Chern Classes of Blow-ups] (Lemma 1.3 in \cite{A09})
    Let $X \subseteq Y$ be nonsingular varieties, and $\pi: \tilde{Y} \rightarrow Y$ the blow-up of Y along $X$, and $\tilde{X}$ the exceptional divisor. If $X$ is a complete intersection of hypersurfaces $Z_1, \dots, Z_d$ meeting transversally in $Y$, then the Chern class of the tangent bundle of the blow up $\tilde{Y}$ is related to $c(T_Y)$ by the formula
    \begin{align*}
        c(T_{\tilde{Y}}) = \f{(1 + \tilde{X})(1 + \pi^*Z_1 - \tilde{X})\cdots(1 + \pi^*Z_d - \tilde{X})}{(1 + \pi^*Z_1)\cdots(1 + \pi^*Z_d)}\pi^*c(T_Y)
    \end{align*}
    \label{lemma:blowup_formula}
\end{lemma}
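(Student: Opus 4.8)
The plan is to realize the blow-up $\tilde Y$ as a subvariety of a projective bundle over $Y$ and to extract $c(T_{\tilde Y})$ from the tangent sequence of that bundle together with an adjunction computation; the complete-intersection hypothesis is exactly what makes every bundle in sight split into line bundles built from the $\mathcal{O}(Z_i)$. Write $E = \tilde X$ for the exceptional divisor and $L_i = \mathcal{O}_Y(Z_i)$, so that $Z_i = c_1(L_i)$ and $X = V(s_1, \dots, s_d)$ for sections $s_i$ of $L_i$ meeting transversally. A useful first observation is that since each smooth $Z_i$ contains $X$ with multiplicity one, the total transform splits as $\pi^* Z_i = \tilde Z_i + E$, where $\tilde Z_i$ is the (smooth) proper transform; hence $1 + \pi^* Z_i - E = c(\mathcal{O}_{\tilde Y}(\tilde Z_i))$, and the asserted formula is really the statement
\begin{align*}
    c(T_{\tilde Y}) = (1 + E)\prod_{i=1}^d \f{c(\mathcal{O}_{\tilde Y}(\tilde Z_i))}{\pi^* c(L_i)}\,\pi^* c(T_Y).
\end{align*}
This reformulation is what I would aim to prove.

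First I would build the ambient bundle. The ideal sheaf $\mathcal{I}_X = (s_1, \dots, s_d)$ has a presentation $\bigoplus_i L_i^{-1} \twoheadrightarrow \mathcal{I}_X$, which induces a closed embedding of the Rees-algebra $\mathrm{Proj}$, i.e.\ of $\tilde Y$, into the projective bundle $\mathbb{P} := \mathbb{P}(\mathcal{E})$ with $\mathcal{E} = \bigoplus_i L_i^{-1}$ and projection $\rho : \mathbb{P} \to Y$. Over $Y \setminus X$ the embedding is the graph of $[s_1 : \cdots : s_d]$, and over $X$ the exceptional divisor $E$ is the full fiber $\rho^{-1}(X) = \mathbb{P}(\mathcal{E}|_X)$. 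The two Euler sequences for $\mathbb{P}$,
\begin{align*}
    0 \to S \to \rho^* \mathcal{E} \to \mathcal{O}_{\mathbb{P}}(1) \to 0, \qquad 0 \to T_{\mathbb{P}/Y} \to T_{\mathbb{P}} \to \rho^* T_Y \to 0,
\end{align*}
together with $T_{\mathbb{P}/Y} = \mathcal{H}om(S, \mathcal{O}_{\mathbb{P}}(1))$, give $c(T_{\mathbb{P}})$ explicitly as a product of factors $\rho^* c(L_i)^{\pm 1}$ and powers of $\zeta := c_1(\mathcal{O}_{\mathbb{P}}(1))$, since $\mathcal{E}$ is a sum of line bundles.

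The heart of the argument is identifying $N_{\tilde Y / \mathbb{P}}$ and then applying $c(T_{\tilde Y}) = c(T_{\mathbb{P}})|_{\tilde Y} / c(N_{\tilde Y/\mathbb{P}})$. The section $(s_i)$ of $\rho^* \mathcal{E}^\vee$, projected into the rank-$(d-1)$ quotient $S^\vee$, gives a section $\bar s$ whose zero scheme is $\tilde Y \cup \mathbb{P}|_X$: over $X$ all sections vanish, so the full fiber appears as an excess component of the expected-codimension-$(d-1)$ locus $\tilde Y$. This is precisely a residual-intersection situation (Fulton, Ch.\ 9): $\tilde Y$ is residual to $\mathbb{P}|_X$, and the residual formula expresses $N_{\tilde Y/\mathbb{P}}$ as $S^\vee|_{\tilde Y}$ corrected along $E = \tilde Y \cap \mathbb{P}|_X$. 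Carrying out this computation, and using the dictionary $\mathcal{O}_{\mathbb{P}}(-1)|_{\tilde Y} = \mathcal{O}_{\tilde Y}(E)$ (up to the pullback twist coming from the chosen presentation), so that $\zeta|_{\tilde Y}$ becomes $-E$ and $\rho^* Z_i$ becomes $\pi^* Z_i$, collapses the Euler-sequence product into the claimed rational expression.

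The main obstacle is the residual step: correctly bookkeeping the excess component $\mathbb{P}|_X$ so as to obtain the exact correction of $N_{\tilde Y/\mathbb{P}}$ by $E$, and simultaneously pinning down the twist relating $\mathcal{O}_{\mathbb{P}}(1)|_{\tilde Y}$, $E$, and the $\pi^* Z_i$ without sign or duality errors. As a consistency check I would verify the case $d = 1$: then $X = Z_1$ is a smooth divisor, $\pi$ is an isomorphism with $E = \pi^* Z_1$, and the formula collapses to $c(T_{\tilde Y}) = \tfrac{(1+E)(1 + \pi^* Z_1 - E)}{1 + \pi^* Z_1}\,\pi^* c(T_Y) = \pi^* c(T_Y)$, as it must. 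One could alternatively avoid the projective-bundle embedding by starting from the general smooth-center blow-up tangent sequence and feeding in the splitting $N_{X/Y} = \bigoplus_i L_i|_X$, but the product form of the answer emerges most transparently from the residual computation above.
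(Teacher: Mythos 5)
First, a framing note: the paper does not prove this lemma at all --- it is imported verbatim as Lemma 1.3 of \cite{A09} --- so there is no internal proof to compare against, and your argument must stand on its own. Your overall strategy (embed $\tilde{Y}$ into $\mathbb{P} := \mathbb{P}(\mathcal{E})$ with $\mathcal{E} = \bigoplus_i L_i^{-1}$, then apply adjunction) is sound, and your dictionary is right: $\mathcal{O}_{\mathbb{P}}(1)|_{\tilde{Y}} = \mathcal{I}_X \cdot \mathcal{O}_{\tilde{Y}} = \mathcal{O}_{\tilde{Y}}(-\tilde{X})$, so $\zeta|_{\tilde{Y}} = -\tilde{X}$ with no extra twist for your choice of $\mathcal{E}$, and $\pi^*Z_i = \tilde{Z}_i + \tilde{X}$. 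But the step you call the heart of the argument rests on a false premise, and it contradicts a correct sentence you wrote one paragraph earlier. You observe that the exceptional divisor is the \emph{full fiber} $\rho^{-1}(X) = \mathbb{P}(\mathcal{E}|_X)$; this is correct, precisely because $X$ is cut out by $d$ hypersurfaces and $\mathcal{E}$ has rank $d$. But then $\mathbb{P}|_X = \tilde{X} \subseteq \tilde{Y}$, so the zero locus of $\bar{s}$ --- which is indeed $\tilde{Y} \cup \mathbb{P}|_X$ as a set --- is just $\tilde{Y}$: the union is redundant, there is no excess component, and there is nothing for the residual-intersection machinery of Fulton, Chapter 9, to act on. The equality $Z(\bar{s}) = \tilde{Y}$ is moreover scheme-theoretic: in local coordinates where $X = \{x_1 = \cdots = x_d = 0\}$, on the chart $\lambda_d \neq 0$ of $\mathbb{P}$ with $t_j = \lambda_j / \lambda_d$, the components of $\bar{s}$ are $x_j - t_j x_d$ for $j = 1, \dots, d-1$, the standard blow-up equations; their differentials are everywhere independent, so $Z(\bar{s})$ is smooth of codimension $d - 1$ (equal to the rank of $S^\vee$), hence reduced and equal to $\tilde{Y}$, and $\bar{s}$ is a regular section. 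Consequently $N_{\tilde{Y}/\mathbb{P}} = S^\vee|_{\tilde{Y}}$ on the nose --- ``$S^\vee|_{\tilde Y}$ corrected along $E$'' is exactly what it is not --- and a proof organized around computing such a correction is a step that would fail, or at best terminate in the discovery that the correction is trivial.

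Once this is repaired, the argument closes in three lines, more simply than you projected. From the relative Euler sequence, $c(T_{\mathbb{P}}) = \prod_{i=1}^{d}\bigl(1 + \rho^*Z_i + \zeta\bigr)\,\rho^*c(T_Y)$ where $\zeta = c_1(\mathcal{O}_{\mathbb{P}}(1))$; from the tautological sequence $0 \to S \to \rho^*\mathcal{E} \to \mathcal{O}_{\mathbb{P}}(1) \to 0$, $c(S^\vee) = \prod_{i=1}^{d}\bigl(1 + \rho^*Z_i\bigr)/(1 - \zeta)$. Restricting to $\tilde{Y}$ (so $\zeta \mapsto -\tilde{X}$ and $\rho^* \mapsto \pi^*$) and applying adjunction with $N_{\tilde{Y}/\mathbb{P}} = S^\vee|_{\tilde{Y}}$ gives
\begin{align*}
    c(T_{\tilde{Y}}) \;=\; \frac{c(T_{\mathbb{P}})|_{\tilde{Y}}}{c(N_{\tilde{Y}/\mathbb{P}})} \;=\; \frac{(1 + \tilde{X})\prod_{i=1}^{d}\bigl(1 + \pi^*Z_i - \tilde{X}\bigr)}{\prod_{i=1}^{d}\bigl(1 + \pi^*Z_i\bigr)}\;\pi^*c(T_Y),
\end{align*}
which is the asserted formula. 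Your preliminary reductions (the rewriting via proper transforms $1 + \pi^*Z_i - \tilde{X} = c(\mathcal{O}_{\tilde{Y}}(\tilde{Z}_i))$, and the $d = 1$ consistency check) are correct and unaffected by this repair.
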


\begin{lemma} [CSM Classes of Complements] (Theorem 1 in \cite{A99})
    Let $X \subset Y$ be a subvariety of a nonsingular variety, with $X = D_1 \cup \dots \cup D_n$ a normal crossings divisor with smooth components. The CSM class of the complement $Y\setminus X$ is
    \begin{align*}
        c_{SM}(Y\setminus X) &= c(\Omega^1_{Y}(\log X)^\vee ) \cap [Y] \\
        &= \f{c(T_Y)}{(1+D_1) \cdots (1+D_n)} \cap [Y]
    \end{align*}
    \label{lemma:complement_formula}
\end{lemma}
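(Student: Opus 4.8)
The plan is to compute $c_{SM}(Y \setminus X)$ directly from the defining properties of the natural transformation $c_*$, using inclusion--exclusion over the closed strata of the normal crossings divisor, and to treat the equality of the two right-hand sides as a purely K-theoretic identity about the logarithmic tangent bundle.

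First I would dispose of the equality $c(\Omega^1_Y(\log X)^\vee) \cap [Y] = \f{c(T_Y)}{(1+D_1)\cdots(1+D_n)}\cap [Y]$. Writing $T_Y(-\log X) = \Omega^1_Y(\log X)^\vee$, this reduces to the Chern class identity $c(T_Y(-\log X)) = c(T_Y) / \prod_i (1+D_i)$, which I would establish one component at a time. For a single smooth component $D_i$ with inclusion $\iota_i : D_i \hookrightarrow Y$, the residue sequence $0 \to T_Y(-\log D_i) \to T_Y \to \iota_{i*}N_{D_i/Y} \to 0$ together with the Whitney formula (valid for short exact sequences of coherent sheaves on the smooth $Y$, where every sheaf has a finite locally free resolution) gives $c(T_Y(-\log D_i)) = c(T_Y)/c(\iota_{i*}N_{D_i/Y})$; the resolution $0 \to \mathcal{O}_Y \to \mathcal{O}_Y(D_i) \to \iota_{i*}N_{D_i/Y} \to 0$ then yields $c(\iota_{i*}N_{D_i/Y}) = 1 + D_i$. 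Peeling off the components successively produces the full product.

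The heart of the argument is the computation of the CSM class itself. By inclusion--exclusion on indicator functions, writing $D_S = \cap_{i\in S} D_i$ and $D_\emptyset = Y$, one has the identity of constructible functions $\mathbbm{1}_{Y\setminus X} = \sum_{S\subseteq\{1,\dots,n\}} (-1)^{|S|}\mathbbm{1}_{D_S}$. Applying the additive transformation $c_*$ and using its naturality under the proper closed inclusions $\iota_S : D_S \hookrightarrow Y$ (proper because $Y$ is proper), each smooth stratum contributes $c_*(\mathbbm{1}_{D_S}) = (\iota_S)_*(c(T_{D_S})\cap[D_S])$. Normal crossings makes $D_S$ a transversal intersection, so $N_{D_S/Y} = \bigoplus_{i\in S}\mathcal{O}_{D_S}(D_i)$ and hence $c(T_{D_S}) = \iota_S^*\big(c(T_Y)/\prod_{i\in S}(1+D_i)\big)$, while $(\iota_S)_*[D_S] = \prod_{i\in S} D_i \cap [Y]$. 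The projection formula then rewrites the $S$-term as $\big(c(T_Y)\prod_{i\in S} D_i \big/ \prod_{i\in S}(1+D_i)\big)\cap[Y]$.

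Finally I would sum over $S$ and recognize the resulting product: $\sum_{S}(-1)^{|S|}\prod_{i\in S}\f{D_i}{1+D_i} = \prod_{i=1}^n\big(1 - \f{D_i}{1+D_i}\big) = \prod_{i=1}^n \f{1}{1+D_i}$, which gives $c_{SM}(Y\setminus X) = \f{c(T_Y)}{\prod_i(1+D_i)}\cap[Y]$ and matches the first expression by the preceding step. The main obstacle is the justification that $c_*(\mathbbm{1}_{D_S}) = (\iota_S)_*(c(T_{D_S})\cap[D_S])$: this is exactly where the definition of $c_*$ must be invoked carefully, since the normalization $c_*(\mathbbm{1}_V) = c(TV)\cap[V]$ is only available for smooth \emph{proper} $V$. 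Naturality of $c_*$ under the proper inclusion $\iota_S$ converts the intrinsic CSM class of the smooth proper stratum $D_S$ into its pushforward in $A_*(Y)$, and checking that the constructible-function pushforward $(\iota_S)_*\mathbbm{1}_{D_S}$ equals $\mathbbm{1}_{D_S}$ as a function on $Y$ is what makes the inclusion--exclusion step legitimate.
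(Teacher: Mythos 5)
The paper does not prove this lemma at all: it is imported verbatim as Theorem 1 of \cite{A99} and used as a black box in the proof of Theorem \ref{thm:csm_realizable}, so there is no in-paper argument to compare against. Judged on its own, your proof is correct, and it is the standard stratification proof of Aluffi's formula. The inclusion--exclusion identity $\mathbbm{1}_{Y\setminus X} = \sum_{S}(-1)^{|S|}\mathbbm{1}_{D_S}$ is verified pointwise (at a point lying in exactly the components indexed by $I$, the sum is $(1-1)^{|I|}$); the normal crossings hypothesis with smooth components makes each $D_S$ smooth of codimension $|S|$ with $N_{D_S/Y} = \bigoplus_{i \in S}\mathcal{O}_{D_S}(D_i)$, so the normal bundle sequence, the projection formula, and $(\iota_S)_*[D_S] = \prod_{i\in S}D_i \cap [Y]$ give the stated $S$-term, and the telescoping identity $\sum_S (-1)^{|S|}\prod_{i\in S} D_i/(1+D_i) = \prod_i (1+D_i)^{-1}$ finishes the Chow-theoretic side. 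Your K-theoretic reduction of $c(\Omega^1_Y(\log X)^\vee) = c(T_Y)/\prod_i(1+D_i)$ via the residue sequence and the resolution $0 \to \mathcal{O}_Y \to \mathcal{O}_Y(D_i) \to \iota_{i*}N_{D_i/Y}\to 0$ is also sound, since the total Chern class is multiplicative on exact sequences of coherent sheaves on a smooth (quasi-projective) variety.

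Two small corrections of emphasis. First, closed immersions are proper morphisms unconditionally, so the parenthetical ``proper because $Y$ is proper'' misattributes what properness of $Y$ buys you: it is needed so that each stratum $D_S$ is itself a \emph{proper} variety, which is exactly the hypothesis under which the normalization $c_*(\mathbbm{1}_{D_S}) = c(T_{D_S})\cap[D_S]$ is available (Proposition 3.1 of the paper states it only for proper nonsingular varieties) --- and you do correctly flag this as the delicate point at the end. Second, the lemma as stated (and as proved by Aluffi) does not assume $Y$ proper; in that generality your argument as written does not apply and would need Kennedy's algebraic formulation of $c_*$ or a compactification step. Since the paper's Definition 3.2 of $c_{SM}$ presupposes a nonsingular proper ambient variety and the only application is to the projective wonderful model $\mathcal{W}$, this restriction is harmless here, but it should be stated as a hypothesis of your proof rather than left implicit.
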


\begin{theorem}
    The CSM class of the complement of the arrangement in its wonderful model is given in terms of the Chow ring generators by the formula 
    \begin{align*}
        c_{SM}(\mathcal{W} \setminus \mathcal{A}) &= \prod_{r=1}^{\rk(E)}\left(1 - \sum_{\substack{F \in \mathcal{L}\\ \rk(F) \geq r}} x_F\right) \cap [\mathcal{W}]
    \end{align*}
    \label{thm:csm_realizable}
\end{theorem}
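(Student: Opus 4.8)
The plan is to compute $c_{SM}(\mathcal{W}\setminus\mathcal{A})$ directly via Aluffi's complement formula (Lemma~\ref{lemma:complement_formula}), and then repeatedly apply the blow-up formula (Lemma~\ref{lemma:blowup_formula}) to track $c(T_\mathcal{W})$ through the tower of blow-ups defining the wonderful model. Since $\pi:\mathcal{W}\to\PP^d$ is an isomorphism away from the arrangement and the components of $\mathcal{W}\setminus\mathcal{A}$ (the proper transforms of the $H_i$ together with the exceptional divisors) form a normal crossings divisor, Lemma~\ref{lemma:complement_formula} gives
\begin{align*}
    c_{SM}(\mathcal{W}\setminus\mathcal{A}) = \frac{c(T_\mathcal{W})}{\prod_{i}(1+\tilde{H}_i)\prod_{F\in\hat{\mathcal{L}}}(1+E_F)}\cap[\mathcal{W}],
\end{align*}
where $\tilde{H}_i$ are the proper transforms of the hyperplanes and $E_F$ are the exceptional divisors. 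The goal is to show this rational expression collapses, under the relations of $A^*(\mathcal{W})\cong A^*(M)$, to the staircase product.

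\textbf{The main computational step} is assembling $c(T_\mathcal{W})$. Each blow-up in Definition~\ref{def:wonderfulmodel} occurs along (the proper transform of) $L_F=\bigcap_{i\in F}H_i$, which is the transverse complete intersection of the $H_i$ for $i$ in a basis of $F$, so $\rk(F)$ hypersurfaces are involved. Applying Lemma~\ref{lemma:blowup_formula} at the flat $F$ contributes a factor $(1+E_F)\prod(1+\pi^*Z_j-E_F)/\prod(1+\pi^*Z_j)$ with $\rk(F)$ terms in each product. **I expect** that when I multiply $\pi^*c(T_{\PP^d})=(1+h)^{d+1}$ against all blow-up factors and then divide by the denominator from the complement formula, the hyperplane contributions $(1+\tilde H_i)$ cancel the pullback numerators, and the exceptional $(1+E_F)$ factors combine with the $(1+\pi^*Z_j-E_F)$ numerators to leave a clean product over flats. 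The natural translation is $x_F\leftrightarrow E_F$ (with $x_E\leftrightarrow -h$), so I will need to express each surviving factor of the form $\bigl(1-\sum_{\rk(F)\ge r}x_F\bigr)$ by grouping exceptional divisors by rank threshold.

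\textbf{The key bookkeeping identity} I anticipate is that a divisor $E_F$ appears in exactly those factors indexed by $r\le\rk(F)$: in the staircase product $\prod_{r=1}^{\rk(E)}\bigl(1-\sum_{\rk(F)\ge r}x_F\bigr)$, the variable $x_F$ sits in precisely $\rk(F)$ of the $\rk(E)$ factors. This should match the multiplicity with which $E_F$ enters the telescoping Chern-class computation: a flat of rank $\rho$ is blown up once but interacts with the $\rho$ hypersurface sections cutting out $L_F$, and the repeated $(1+\pi^*Z_j-E_F)$ terms accumulate into the $\rho$ distinct staircase factors. **The hard part will be** controlling the interaction between blow-ups at comparable flats: when $F\subsetneq F'$, the proper transform of $L_F$ still meets the earlier exceptional divisor $E_{F'}$, so the pullbacks $\pi^*Z_j$ in later stages already carry correction terms $-E_{F'}$. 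Verifying that these nested corrections reorganize exactly into the staircase — rather than producing spurious cross terms — will require careful use of the incomparability relations $I_M$ (which kill $x_Fx_{F'}$ for non-nested flats) and the linear relations $J_M$ (which encode $\sum_{F\ni i}x_F=\sum_{F\ni j}x_F$ and let me replace each $\pi^*H_i$ class uniformly). I would organize this as an induction on the dimension of the strata being blown up, showing at each stage that the partially-computed Chern class already agrees with the truncation of the staircase formula to the flats processed so far.
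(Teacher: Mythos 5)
Your setup coincides with the paper's own proof: both apply Lemma~\ref{lemma:complement_formula} to the normal crossings boundary of $\mathcal{W}$ and assemble $c(T_\mathcal{W})$ by running Lemma~\ref{lemma:blowup_formula} along the blow-up tower, with the pullbacks of the hyperplanes through the flat $F$ carrying the correction terms $-x_{F'}$ for $F'\supsetneq F$ exactly as you anticipate. Carried to completion (and using $\pi^*c(T_{\PP^d})=(1-x_E)^{\rk(E)}$), this cancellation yields
\begin{align*}
    c_{SM}(\mathcal{W}\setminus\mathcal{A}) \;=\; \prod_{F \in \mathcal{L}} \left(\frac{1 - \sum_{F' \in \mathcal{L},\, F' \supseteq F} x_{F'}}{1- \sum_{F' \in \mathcal{L},\, F' \supsetneq F} x_{F'}}\right)^{\rk(F)} \cap [\mathcal{W}],
\end{align*}
and your ``bookkeeping identity'' (that $x_F$ should occur with multiplicity $\rk(F)$) is consistent with the exponents here. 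Up to this point your plan is sound and is the same route the paper takes.

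The genuine gap is the step you defer: passing from this product of fractions to the staircase product. This is \emph{not} a formal identity of power series in the variables $x_F$ --- already for $U_{2,3}$ the two expressions differ in the coefficient of $x_{\{0\}}x_{\{1\}}$ --- so no amount of multiplicity counting or cancellation bookkeeping can produce it; it holds only modulo the ideal $I_M$, and your proposed induction (``the partially-computed Chern class agrees with the truncation of the staircase to the flats processed so far'') has no workable formulation: the intermediate varieties in the tower are not wonderful models, their Chow rings lack generators for the unprocessed flats, and the staircase has no natural truncation living there. What the paper does instead is a coefficient-comparison argument: in $A^*(\mathcal{W})$ only monomials supported on chains of flats survive; for a chain contained in a maximal flag $\emptyset \subsetneq F_1 \subsetneq \cdots \subsetneq F_{\rk(E)} = E$ one may set all non-flag variables to zero without affecting the coefficient of that chain monomial, and under this specialization the product of fractions telescopes,
\begin{align*}
    \prod_{r=1}^{\rk(E)}\left(\frac{1 - \sum_{i=r}^{\rk(E)} x_{F_i}}{1 - \sum_{i=r+1}^{\rk(E)} x_{F_i}}\right)^{r} \;=\; \prod_{r=1}^{\rk(E)}\left(1 - \sum_{i=r}^{\rk(E)} x_{F_i}\right),
\end{align*}
which is precisely the same specialization of the staircase product; hence every chain monomial carries the same coefficient $C_{\mathcal{F}}$ in both expressions and the two classes agree in $A^*(\mathcal{W})$. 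Some such mechanism --- reduction to flags plus this telescoping identity, or an equivalent --- is the actual content of the theorem, and it is the piece your proposal is missing.
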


\begin{proof}

The wonderful model $\mathcal{W}$ is constructed by a sequence of blow-ups of $\PP^d$ at the intersection of hyperplanes determined by each flat $F \in \hat{\mathcal{L}}$. If we blow-up according to order by rank, highest to lowest (i.e. by dimension, lowest to highest), the center of each blow-up is the intersection of $\rk(F)$ many (pullbacks of) hyperplanes determined by $F$, minus the exceptional divisors already blown up in the intersection (corresponding to the flats $F'$ which contain $F$). That is, when all blow-ups are complete, each of the $\rk(F)$ hyperplanes in locus of blow-up pulls back to 
\begin{align*}
    \pi^* h -\sum_{\substack{F' \in \hat{\mathcal{L}} \\ F' \supsetneq F}} x_{F'} &= -x_E -\sum_{\substack{F' \in \hat{\mathcal{L}} \\ F' \supsetneq F}} x_{F'}\\
    &= -\sum_{\substack{F' \in \mathcal{L} \\ F' \supsetneq F}} x_{F'}
\end{align*}
in $A^*(\mathcal{W})$, where $h$ is the class of the hyperplane in $\PP^d$, $c_1(\mathcal{O}_{\PP^d}(1))$. By applying the blow-up formula (Lemma \ref{lemma:blowup_formula}) according to each flat $F \in \hat{\mathcal{L}}$,
\begin{align*}
c(T_\mathcal{W}) = \prod_{F \in \hat{\mathcal{L}}} \left[\f{(1 + x_F) (1 -\sum_{\substack{F' \in \mathcal{L} \\ F' \supsetneq F}} x_{F'} - x_F)^{\rk(F)} }{(1 -\sum_{\substack{F' \in \mathcal{L} \\ F' \supsetneq F}} x_{F'})^{\rk(F)}} \right]\pi^*c(T_{\PP^d}).
\end{align*}

The complement of the arrangement in $\mathcal{W}$ is the complement of the union of all divisors $x_F$ for all $F \in \hat{\mathcal{L}}$, which is normal crossings with smooth components by construction. By the complement formula (Lemma \ref{lemma:complement_formula}), we get
\begin{align*}
    c_{SM}(\mathcal{W} \setminus \mathcal{A}) &= \f{c(T_{\mathcal{W}})}{\prod_{F \in \hat{\mathcal{L}}} (1 + x_F)}  \cap [\mathcal{W}]\\
    &= \prod_{F \in \hat{\mathcal{L}}} \left[\f{(1 + x_F) (1 -\sum_{\substack{F' \in \mathcal{L} \\ F' \supsetneq F}} x_{F'} - x_F)^{\rk(F)} }{(1 + x_F)(1 -\sum_{\substack{F' \in \mathcal{L} \\ F' \supsetneq F}} x_{F'})^{\rk(F)}} \right]\pi^* c(T_{\PP^d}) \cap [\mathcal{W}].
\end{align*}
By cancellation, and $\pi^*c(T_{\PP^d}) = (1 + \pi^* h)^{d+1} = (1 + \pi^* h)^{\rk(E)} = (1-x_E)^{\rk(E)}$, we get
\begin{align*}
   c_{SM}(\mathcal{W} \setminus \mathcal{A}) = \dots &= \prod_{F \in \hat{\mathcal{L}}} \left[\f{(1 -\sum_{\substack{F' \in \mathcal{L} \\ F' \supsetneq F}} x_{F'} - x_F)^{\rk(F)} }{(1 -\sum_{\substack{F' \in \mathcal{L} \\ F' \supsetneq F}} x_{F'})^{\rk(F)}} \right](1 -x_E)^{\rk(E)} \cap [\mathcal{W}]\\
    &= \prod_{F \in \hat{\mathcal{L}}} \left[\f{(1 - \sum_{\substack{F' \in \mathcal{L} \\ F' \supseteq F}} x_{F'})^{\rk(F)} }{(1 -\sum_{\substack{F' \in \mathcal{L} \\ F' \supsetneq F}} x_{F'}))^{\rk(F)}} \right]\f{(1 -x_E)^{\rk(E)}}{(1)^{\rk(E)}} \cap [\mathcal{W}]\\
    &= \prod_{F \in \mathcal{L}} \left(\f{1 - \sum_{\substack{F' \in \mathcal{L} \\ F' \supseteq F}} x_{F'}}{1- \sum_{\substack{F' \in \mathcal{L} \\ F' \supsetneq F}} x_{F'}}\right)^{\rk(F)} \cap [\mathcal{W}].
\end{align*}
Next, we compute the coefficients in the formal expansion of the above expression in $\Z[x_F \mid F \in \mathcal{L}]$. The coefficient of each monomial $\prod_{F \in \mathcal{F}} x_F$ (where $\mathcal{F}$ some multisubset of $\mathcal{L}$) may be extracted as follows:

$\mathcal{F}$ must be a chain, since otherwise  $\prod_{F \in \mathcal{F}} x_F = 0$, since it corresponds to an empty intersection in $\mathcal{W}$. Choose a maximal flag $\{\emptyset = F_0 \subsetneq F_1 \subsetneq \dots \subsetneq F_d \subsetneq F_{d+1} = E\}$ (where $\rk(F_i) = i$) containing the chain $\mathcal{F}$ as a multisubset. We may rewrite the monomial as

$$\prod_{F \in \mathcal{F}} x_F = x_{F_{r_1}} \cdots x_{F_{r_k}}.$$

where $k = |\mathcal{F}|$ is the degree of the monomial (also, the size of the multiset), and $r_1 \leq \dots \leq r_k$ is a monotonic sequence. In other words, we re-index the multiset according to the order in the flag by inclusion: 
$$\mathcal{F} = \{F_{r_1}, \dots, F_{r_k}\}, \quad F_{r_1}\subseteq \dots \subseteq F_{r_k}.$$

Let $m_\mathcal{F}(r) = |\{F \in \mathcal{F} \mid \rk(F) = r\}|$ count the multiplicity of $F_r$ in the multisubset $\mathcal{F}$, i.e. the number of occurrences of $r$ in $\{r_1, \dots, r_k\}$.

For all $F$ not included in the flag, we may set $x_F = 0$. This has no effect on $\prod_{F \in \mathcal{F}} x_F$, and only kills some other monomials. If $F$ is not in the flag, the fraction $\left(\f{1 - \sum_{F' \supseteq F} x_{F'}}{1- \sum_{F' \supsetneq F} x_{F'}}\right)^{\rk(F)}$ becomes $1$. Otherwise, when $F = F_r$ in the flag, it becomes $\left(\f{1 - \sum_{i = r}^{\rk(E)} x_{F_i}}{1- \sum_{i = r+1}^{\rk(E)} x_{F_i}}\right)^r$. The expression then simplifies by cancellation (we indicate the result of killing the non-flag divisors by $\mapsto$):
\begin{align*}
    \prod_{F \in \mathcal{L}} \left(\f{1 - \sum_{F' \supseteq F} x_{F'}}{1- \sum_{F' \supsetneq F} x_{F'}}\right)^{\rk(F)} &\mapsto
     \prod_{r = 1}^{\rk(E)} \left(\f{1 - \sum_{i = r}^{\rk(E)} x_{F_i}}{1- \sum_{i = r+1}^{\rk(E)} x_{F_i}}\right)^r \\
     & = \prod_{r = 1}^{\rk(E)} \left(\f{1 - (x_{F_r} + x_{F_{r+1}} + \dots + x_{E})}{1- (x_{F_{r+1}} + \dots + x_{E})}\right)^r\\
    &= \prod_{r = 1}^{\rk(E)} \left(1 - \sum_{i = r}^{\rk(E)} x_{F_i}\right) \\ 
    &= \prod_{r = 1}^{\rk(E)} (1 - (x_{F_r} + \dots + x_{E})).
\end{align*}
The coefficient of the monomial $x_{F_{r_1}} \cdots x_{F_{r_k}}$ may be deduced by counting how many times it the appears in the expansion\footnote{alternatively, we can use the Taylor's Theorem to compute the coefficient by taking the appropriate partial derivatives at zero.}. The count amounts to choosing from which factor each $x_{F_{r_i}}$ comes, then dividing to account for identical permutations. Note that each $-x_{F_{r_i}}$ appears as a term only in the first $r_i$ factors. There are $r_1$ possible choices for $x_{F_{r_1}}$, then $r_2 - 1$ many remaining choices for $x_{F_{r_2}}$, and so on (i.e. $r_i - i + 1$ choices for $x_{F_{r_i}}$ once the previous $i-1$ variables have been chosen). Therefore, the coefficient on $x_{F_{r_1}} \cdots x_{F_{r_k}}$ is
\begin{align*}
    C_{\mathcal{F}} := (-1)^k \f{\prod_{i = 1}^k (r_i - i + 1)}{\prod_{r = 1}^{\rk(E)} m_{\mathcal{F}}(r)!} = (-1)^k \f{(r_1) (r_2 - 1) \cdots (r_k - k + 1)}{m_{\mathcal{F}}(1)! m_{\mathcal{F}}(2)! \cdots m_{\mathcal{F}}(\rk(E))!}.
\end{align*}
Denote this coefficent by $C_{\mathcal{F}}$ for each multisubset $\mathcal{F}$. These are precisely the coefficients conjectured by Fife and Rincón.
Furthermore, an alternate expression
\begin{align*}
\prod_{r=1}^{\rk(E)}\left(1 - \sum_{\substack{F \in \mathcal{L} \\ \rk(F) \geq r}} x_F\right)
\end{align*}
yields the same coefficients on every monomial, since it has the same form after killing non-flag divisors: $\prod_{r=1}^{\rk(E)}\left(1 - \sum_{\rk(F) \geq r} x_F\right) \mapsto \prod_{r = 1}^{\rk(E)} \left(1 - \sum_{i = r}^{\rk(E)} x_{F_i}\right)$. By comparing coefficients, we conclude that it is equal to the original CSM class:
\begin{align*}
    c_{SM}(\mathcal{W} \setminus \mathcal{A}) = \dots &= \prod_{F \in \mathcal{L}} \left(\f{1 - \sum_{\substack{F \in \mathcal{L} \\ F' \supseteq F}} x_{F'}}{1- \sum_{\substack{F \in \mathcal{L} \\F' \supsetneq F}} x_{F'}}\right)^{\rk(F)} \cap [\mathcal{W}]\\
    &= \sum_{\mathcal{F} \text{ multisubset of } \mathcal{L}} C_{\mathcal{F}} \prod_{F \in \mathcal{F}} x_F \cap [\mathcal{W}]\\
    &= \prod_{r=1}^{\rk(E)}\left(1 - \sum_{\substack{F \in \mathcal{L} \\ \rk(F) \geq r}} x_F\right) \cap [\mathcal{W}].
\end{align*}
\end{proof}

\begin{corollary}
    The formula conjectured by Fife and Rinc\'on holds for $\mathbb{C}$-realizable matroids.
\end{corollary}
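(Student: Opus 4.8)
The plan is to read off the corollary from Theorem~\ref{thm:csm_realizable} by translating its geometric content into the combinatorial language of the matroid Chow ring. Recall that the Fife--Rinc\'on conjecture concerns the Poincar\'e dual $c^{SM}(M) \in A^*(M)$ of the cycle $\csm(M)$, asserting that its degree-$k$ part equals $\sum_{|\mathcal{F}|=k} C_{\mathcal{F}} \prod_{F \in \mathcal{F}} x_F$, summed over size-$k$ multichains of proper non-empty flats. Thus it suffices to exhibit a class in $A^*(M)$ that caps with $1_M$ to give $\csm(M)$ and whose expansion produces exactly these coefficients.

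First I would use the realizability hypothesis. When $M$ arises from an arrangement $\mathcal{A}$, the identification recalled in Section~2 gives a ring isomorphism $A^*(M) \cong A^*(\mathcal{W})$ sending each $x_F$ to the divisor class of the flat $F$, together with the dual module isomorphism $MW_*(B(M)) \cong A_*(\mathcal{W})$ carrying $1_M$ to $[\mathcal{W}]$; these intertwine the respective cap products. Under this dictionary, Theorem~3.1 of \cite{LdMRS19} identifies $\csm(M)$ with the geometric cycle $c_{SM}(\mathcal{W} \setminus \mathcal{A})$.

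Next I would reinterpret Theorem~\ref{thm:csm_realizable} at the level of classes rather than cycles. Because $\mathcal{W}$ is smooth and proper, $c_{SM}(\mathcal{W} \setminus \mathcal{A}) = c^{SM}(\mathcal{W} \setminus \mathcal{A}) \cap [\mathcal{W}]$ for a unique class $c^{SM}(\mathcal{W} \setminus \mathcal{A}) \in A^*(\mathcal{W})$, and the theorem identifies this class with the product $\prod_{r=1}^{\rk(E)}\bigl(1 - \sum_{\rk(F) \geq r} x_F\bigr)$. Transporting this product back through the ring isomorphism yields an element of $A^*(M)$ which, by the compatibility of cap products noted above, caps with $1_M$ to give $\csm(M)$. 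By the uniqueness built into Poincar\'e duality (Remark~\ref{rem:PD}), this element is precisely $c^{SM}(M)$.

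It then remains only to expand the product, and this is already carried out inside the proof of Theorem~\ref{thm:csm_realizable}: the formal expansion equals $\sum_{\mathcal{F}} C_{\mathcal{F}} \prod_{F \in \mathcal{F}} x_F$, the incomparability relations in $I_M$ kill every non-chain monomial, and the surviving coefficients are exactly the $C_{\mathcal{F}}$. Grouping by degree recovers the conjectured formula. I expect the only delicate point to be the first step above, namely verifying that the ring isomorphism and its dual genuinely intertwine the cap products, so that the Poincar\'e dual computed on $\mathcal{W}$ transports to the Poincar\'e dual of $\csm(M)$ on the matroid side. This compatibility is the content of the duality in \cite{AHK} together with the correspondence of \cite{FS97}, so no further computation beyond bookkeeping is required.
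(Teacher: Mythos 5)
Your proposal is correct and follows essentially the same route as the paper: the paper treats this corollary as immediate from Theorem~\ref{thm:csm_realizable}, whose proof already expands the staircase product and identifies the coefficients as the conjectured $C_{\mathcal{F}}$, combined with the identification (recalled in Sections~2 and~3, via Theorem~3.1 of \cite{LdMRS19} and the correspondence $A^*(M) \cong A^*(\mathcal{W})$, $MW_*(B(M)) \cong A_*(\mathcal{W})$) of $\csm(M)$ with $c_{SM}(\mathcal{W} \setminus \mathcal{A})$. Your write-up simply makes explicit the Poincar\'e-duality bookkeeping that the paper leaves implicit, which is exactly the intended argument.
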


\begin{example}
     Let $\mathcal{A}$ be the arrangement in Example \ref{ex:quasitrivial3}. The CSM class of the complement of the arrangement in its wonderful compactification by the previous result is
    \begin{align*}
         c_{SM}(\mathcal{W} \setminus \mathcal{A})= &(1-x_0-x_1-x_2-x_3-x_{01}-x_{02}-x_{03}-x_{123}-x_{0123})\\
         \cdot&(1-x_{01}-x_{02}-x_{03}-x_{123}-x_{0123})
         \cdot(1-x_{0123}) \cap [\mathcal{W}]\\
        = &(1-x_{123}-x_{0123}) \cap [\mathcal{W}].
    \end{align*}
\end{example}

\section{Relation to the CSM cycle of a matroid}

The result from the previous section motivates the following definition for all matroids, not necessarily realizable.
\begin{definition}[]
    Given a loopless matroid $M$, define the class $\st(M)$ in its Chow ring $A^*(M)$ by the formula
    \begin{align*}
        \st(M) := \prod_{r=1}^{\rk(E)}\left(1 - \sum_{\rk(F) \geq r} x_F\right) \in A^*(M).
    \end{align*}
    In the case where $M$ has a loop, define $\st(M) = 0$.
    \label{def:staircase}
\end{definition}
We prove in this section that $\st(M) \cap 1_M$ is the CSM cycle of a matroid defined by L\'opez de Medrano, Rinc\'on, and Shaw. 
Note that the coefficients $C_\mathcal{F}$ conjectured by Fife and Rinc\'on are precisely the coefficients in the expansion of $\st(M)$, so this is equivalent to the statement of their conjecture in full generality.

First we show how this class respects behaves with respect to certain deletion and contraction operations on a matroid.

\begin{definition}[Deletion and Contraction]
    Let $M$ be a matroid on ground set $E$ with rank function $\rk_M$. For any $i \in E$, we can construct the matroid $M\setminus i$, called the deletion of $M$ by the element $i$. This is the matroid on the set $E\setminus i$ with rank function $\rk_{M\setminus i}: 2^{E\setminus i} \to \mathbb{Z}$ given by:
    \begin{align*}
        \rk_{M\setminus i}(S) := \rk_M(S)
    \end{align*}
    For any $i \in E$, we can construct the matroid $M / i$, called the contraction of $M$ by the element $i$.
    This is the matroid on the set $E\setminus i$ with rank function $\rk_{M\setminus i}: 2^{E\setminus i} \to \mathbb{Z}$ given by:
    \begin{align*}
        \rk_{M / i}(S) := \rk_M(S \cup i) - \rk_M(i)
    \end{align*}

\end{definition}

\subsection{Contraction-deletion of $\st$}

For a given element $i \in E$ which is not a coloop, we obtain a projection $\delta: B(M) \to B(M \setminus i)$. By Proposition 2.22 in \cite{S13}, $\delta$ is a tropical modification along some rational function $\phi$; therefore, $\delta$ induces a pullback $\delta^*:Z_*(B(M \setminus i) \to Z_*(B(M))$ and since $div_{B \setminus i}(\phi) = B(M / i)$, we also obtain a pullback $Z_*(B(M / i)) \to Z_*(B(M \setminus i))$, where $Z_*(\Sigma)$ is the group of cycles on the fan $\Sigma$, see Definition 5.12 in \cite{AR09}. These have corresponding ring/group homomorphisms in the context of the Chow rings, see Section 4.2 for details.

\begin{definition}[Deletion map]
    To distinguish the generators of the Chow rings of $M, M \setminus i$ and $M/i$, we label them by
    \begin{align*}
        A^*(M) &= \mathbb{Z}[x_F \mid F \in \mathcal{L}(M)]/(I_{M} + J_{M})\\
        A^*(M\setminus i) &= \mathbb{Z}[y_F \mid F \in \mathcal{L}(M\setminus i)]/(I_{M \setminus i} + J_{M \setminus i})\\
        A^*(M/ i) &= \mathbb{Z}[z_F \mid F \in \mathcal{L}(M/i)]/(I_{M/i} + J_{M/i}).
    \end{align*}
    
    Define the deletion pullback $\bar{\delta}^*: A^*(M \setminus i) \rightarrow A^*(M)$ as the ring homomorphism induced by
    \begin{align*}
        \bar{\delta}^*(y_F) = \sum_{\substack{F' \in \mathcal{L}(M) \\ F'\setminus i = F}} x_{F'}.
    \end{align*}

    Define the element $y_i \in A^*(M \setminus i)$ by
    \begin{align}\label{def:yi}
        y_i := - \sum_{\substack{F \in \mathcal{L}(M\setminus i) \\ F \notin \mathcal{L}(M) \\ F \cup {i} \in \mathcal{L}(M)}} y_F.
    \end{align}

    Multiplication by $y_i$ defines a map $A^*(M / i) \rightarrow A^*(M \setminus i)$
    \begin{align*}
        z \mapsto y_i \cdot \iota(z)
    \end{align*}
    where $\iota(z_F) = y_F$, i.e. $\iota$ replaces the variables $z$ with $y$. This is an embedding $A^*(M / i)$ as the subgroup $y_i A^*(M \setminus i)$. By composition we also obtain a group homomorphism $\bar{\delta}^* (y_i \cdot \iota(\_)): A^*(M / i) \rightarrow A^*(M)$.
\end{definition}

Using these definitions, the staircase class of a matroid satisfies the contraction-deletion formula stated below. For the proof of Theorem \ref{prop:stair_cd}, see Section \ref{sec5}.

\begin{theorem}
    For all non-coloop elements $i \in E$,
    \begin{align*}
        \st(M) = \bar{\delta}^*(\st(M\setminus i)) - \bar{\delta}^* (y_i \cdot \iota(\st(M/i))).
    \end{align*}
    \label{prop:stair_cd}
\end{theorem}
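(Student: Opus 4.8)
The plan is to exploit that $\bar{\delta}^{*}$ is a ring homomorphism while both staircase classes are products of linear forms; thus $\bar{\delta}^{*}$ distributes over the factors of $\st(M\setminus i)$, and the whole identity reduces to a comparison of products of explicit linear forms inside $A^*(M)$. Write $a_r := \sum_{\rk(F)\geq r} x_F$, so that $\st(M) = \prod_{r=1}^{\rk(E)}(1-a_r)$, and let $\beta_r,\gamma_r$ denote the analogous forms for $M\setminus i$ and $M/i$.

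First I would run the factor-by-factor computation of $\bar{\delta}^{*}(\st(M\setminus i)) = \prod_{r}\bigl(1-\bar{\delta}^{*}(\beta_r)\bigr)$. The combinatorial input is the classification of a flat $F$ of $M$ according to whether $i\in F$ and whether adjoining or deleting $i$ changes flat-status and rank: every flat of $M\setminus i$ is the image $F'\setminus i$ of one or two flats $F'$ of $M$, and bookkeeping the ranks gives the clean formula $\bar{\delta}^{*}(1-\beta_r) = 1 - a_r + D_r$, where $D_r := \sum x_F$ ranges over the rank-$r$ flats $F$ of $M$ with $i\in F$ and $\rk(F\setminus i)=\rk(F)-1$. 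Hence $\bar{\delta}^{*}(\st(M\setminus i)) = \prod_{r=1}^{\rk(E)}(1-a_r+D_r)$.

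The contraction term is the delicate part, and its first wrinkle is that $\iota$ is not itself a ring homomorphism (the ideals $J_{M/i}$ and $J_{M\setminus i}$ do not match), so only the composite $\Phi := \bar{\delta}^{*}\circ\bigl(y_i\cdot\iota(\_)\bigr)\colon A^*(M/i)\to A^*(M)$ is well defined; I would therefore evaluate $\Phi$ monomial-by-monomial on the chain monomials of $\st(M/i)$. Since chains of flats of $M/i$ correspond, with ranks shifted by one, to chains of flats of $M$ containing $i$, this is the step where $M/i$-data gets translated into $M$-data. The useful starting identity is $\bar{\delta}^{*}(y_i) = -\sum_{F} x_F$, the sum over flats $F\ni i$ with $\rk(F\setminus i)=\rk(F)$, which the linear relation $\sum_{F\ni i} x_F = 0$ in $A^*(M)$ rewrites as $\sum_{r} D_r$.

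With these computations in hand, the theorem reduces to the single polynomial identity
\begin{align*}
\prod_{r=1}^{\rk(E)}(1-a_r+D_r) - \prod_{r=1}^{\rk(E)}(1-a_r) = \Phi(\st(M/i))
\end{align*}
in $A^*(M)$. Expanding the left-hand difference produces the cross terms $\sum_{\emptyset\neq S}\prod_{r\in S}D_r\prod_{r\notin S}(1-a_r)$, and the main obstacle is to show these reorganize exactly into $\Phi(\st(M/i))$. I expect the decisive tools to be the incomparability relations $x_Fx_{F'}=0$, which annihilate most cross terms because the flats contributing to distinct $D_r$'s are frequently incomparable, together with the rank-shifting bijection between the rank-$r$ flats counted by $D_r$ and the rank-$(r-1)$ flats $H$ with $H\cup i$ a flat, which is precisely what turns the surviving terms into the contraction class. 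I would carry out the matching either by induction on $\rk(E)$, peeling off the top factor $(1-a_{\rk(E)})$, or by restricting to each maximal flag and invoking the piecewise-polynomial description of $A^*(M)$ (as used for Theorem \ref{thm:st=csm}); on a single flag both sides collapse to the explicit coefficient computation from the proof of Theorem \ref{thm:csm_realizable}, reducing the identity to a finite combinatorial check on the coefficients $C_{\mathcal{F}}$. Verifying that this flag-local check is compatible with $\bar{\delta}^{*}$ and $\Phi$ simultaneously is where I anticipate the real work to lie.
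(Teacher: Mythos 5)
Your reductions are correct as far as they go, and they coincide with the opening of the paper's own proof: your $D_r$ is the paper's $u_r$ (the sum over rank-$r$ flats containing $i$ whose rank drops upon deleting $i$), the computation $\bar{\delta}^*(\st(M\setminus i)) = \prod_r(1-a_r+D_r)$ is exactly the paper's, as is the rewriting $\bar{\delta}^*(y_i) = \sum_r D_r$, and the theorem is indeed equivalent to the displayed polynomial identity in $A^*(M)$. The rank-shifting bijection you single out (your $D_r$-flats correspond under $F \mapsto F\setminus i$ to the rank-$(r-1)$ flats $H \not\ni i$ with $H\cup i$ a flat) is also one of the ingredients the paper uses.

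However, everything after the displayed identity is a plan rather than a proof, and the plan misjudges where the difficulty lies. The hope that incomparability annihilates most cross terms fails: when $i$ is not a coloop, flats contributing to distinct $D_r$'s can be comparable. For instance, realize $M$ by the vectors $e_1,e_2,e_3,e_4,e_3+e_4,e_1+e_2$ in $\C^4$ and let $i$ be the element $e_1$; then the flats $\{e_1,e_3\}$ and $\{e_1,e_3,e_4,e_3+e_4\}$ are comparable and contribute to $D_2$ and $D_3$ respectively. So the terms of $\sum_{|S|\geq 2}\prod_{r\in S}D_r\prod_{r\notin S}(1-a_r)$ survive and must be matched, term by term, against the multi-$D$ monomials of $\bar{\delta}^*(y_i\cdot\iota(\st(M/i)))$, which is itself a product containing the same sums; this matching \emph{is} the theorem, and neither of your fallback strategies delivers it. The induction ``peeling off the top factor'' has no inductive structure, since dropping a factor of the staircase corresponds to no operation relating $M$, $M\setminus i$ and $M/i$. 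The flag-restriction idea cannot simply cite the coefficient computation from Theorem \ref{thm:csm_realizable}: that computation takes place in the polynomial ring, whereas your identity holds only modulo $I_M+J_M$, so you would first need to show that after your single use of the linear relations (rewriting $\bar{\delta}^*(y_i)$ as $\sum_r D_r$) the identity holds modulo the incomparability ideal $I_M$ alone, and then determine which patterns of flats can occur along one maximal flag. The paper supplies exactly this missing mechanism, in algebraic form: writing $t_r,s_r,u_r,v_r$ for the rank-$r$ sums over the four classes of flats ($i\notin F$ with $F\cup i$ not a flat; $i\notin F$ with $F\cup i$ a flat; $i\in F$ with $F\setminus i$ a flat; $i\in F$ with $F\setminus i$ not a flat), it expands the difference as the telescoping sum $-\sum_k \prod_{r<k}(1-a_r+u_r)\,u_k\,\prod_{r>k}(1-a_r)$ over a \emph{single} index $k$, and proves three incomparability lemmas --- $t_ju_l=0$ for all $j,l$; $u_ks_l=0$ for $k\leq l$; $u_kv_l=0$ for $l<k$ (each a consequence of the exchange property) --- which turn the $k$-th summand into $-u_k$ times one product independent of $k$, namely $\prod_{r}\bigl(1-s_r-\sum_{j>r}(s+u+v)_j\bigr)$; the sum then factors as $-\bigl(\sum_k u_k\bigr)\prod_r(\cdots) = -\bar{\delta}^*(y_i\cdot\iota(\st(M/i)))$. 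Without this (or an equivalent) mechanism, your proposal establishes the set-up of the paper's proof but not its core.
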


\subsection{Piecewise polynomials and cocycles}

There is an equivalent formulation of $A^*(\Sigma)$ in terms of so-called Courant functions on $\Sigma$. This approach defines $A^*(\Sigma)$ as a quotient of continuous piecewise polynomial functions with integer coefficients on $\Sigma$ by linear functions from $N$ to $\mathbb{R}$, see Section 4 of \cite{AHK} for more details. 

This other formulation leads to the theory of tropical cocycles on $\Sigma$, introduced by François in \cite{F13} and elaborated upon by Gross and Shokrieh in \cite{GS19}, in analogy to Allermann and Rau's theory of cycles on $\Sigma$ from \cite{AR09}. In particular, $Z_*(\Sigma)$, the tropical cycles on $\Sigma$, consists of weighted fans on subdivisions of $\Sigma$, while $C^*(\Sigma)$, the tropical cocycles on $\Sigma$, are continuous functions that are piecewise polynomial in a subdivision of $\Sigma$. In \cite{GS19}, the authors prove that, when given the corresponding intersection products, these $C^*(\Sigma)$-algebras are isomorphic. In particular, given a morphism of fans $f:\Sigma_1 \to \Sigma_2$, a cocycle $\alpha \in C^*(\Sigma_2)$ and a cycle $X \in Z_*(\Sigma_2)$, $f^*(\alpha \cdot X) = f^*(\alpha) \cdot f^*(X)$, where $f^*:Z_*(\Sigma_2) \to Z_*(\Sigma_1)$ is defined in Corollary 4.7 in \cite{GS19}, which agrees with Definition 2.16 in \cite{S13} in the case where $f$ is a tropical modification. 

Using this theory, we describe the pullback $\delta^*:Z_*(B(M \setminus i)) \to Z_*(B(M))$ in terms of cocycles, for a matroid $M$ of rank $r$ on a set $E$ and $i \in E$ not a coloop. Since in our case the map $\delta:B(M) \to B(M \setminus i)$ maps cones to cones, the image of weights on $B(M \setminus i)$ under $\delta^*$ is contained in weights on $B(M)$. Similarly, the image under $\delta^*$ of a piecewise polynomial on $B(M \setminus i)$ is a piecewise polynomial on $B(M)$, giving a pullback $A^*(M \setminus i ) \to A^*(M)$. 

Since $\hat{\delta}:\hat{B}(M) \to \hat{B}(M \setminus i)$ satisfies the conditions of Proposition 3.10 in \cite{FR10}, there exists a rational function $\phi$ such that $\delta$ is a tropical modification along $\phi$, and there is an isomorphism $\phi \cdot MW_*(\hat{B}(M \setminus i)) \xrightarrow{\sim} MW_*(\hat{B}(M / i))$. The inverse of this isomorphism is given by $\hat{j}_*:MW_*(\hat{B}(M / i)) \to MW_*(\hat{B}(M \setminus i))$.

\begin{lemma}
    The inclusion $j_*:MW_*(B(M / i)) \to MW_*(B(M \setminus i))$ is given by $j_*(z \cap 1_{M / i}) = y_i \cdot \iota(z) \cap 1_{M \setminus i}$.
\end{lemma}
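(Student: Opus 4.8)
The plan is to work inside the cocycle/cycle formalism of Gross--Shokrieh \cite{GS19}, under which the Poincaré duality isomorphism $\cap\, 1_{M\setminus i}\colon A^*(M\setminus i)\to MW_*(B(M\setminus i))$ is realized by intersecting a Courant cocycle with the fundamental cycle, and the module identity $(\alpha\beta)\cap 1 = \alpha\cdot(\beta\cap 1)=\beta\cdot(\alpha\cap 1)$ holds. Recall that $\hat{j}_*$, and hence $j_*$, is by construction the inverse of the divisor isomorphism $\phi\cdot(-)\colon \phi\cdot MW_*(\hat B(M\setminus i))\xrightarrow{\sim} MW_*(\hat B(M/i))$; concretely it is the inclusion that views a Minkowski weight supported on the subfan $B(M/i)$ as a weight on $B(M\setminus i)$. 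The identity to prove therefore splits into two claims: (A) the degree-one class $y_i\in A^1(M\setminus i)$ is the cocycle of the modification function $\phi$, so that $y_i\cap 1_{M\setminus i}=\mathrm{div}(\phi)=1_{M/i}$ as a cycle on $B(M\setminus i)$; and (B) intersecting the cocycle $\iota(z)$ with this divisor recovers the inclusion of $z\cap 1_{M/i}$.

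For claim (A) I would first write down $\phi$ explicitly. By Proposition 3.10 of \cite{FR10} the map $\hat\delta\colon \hat B(M)\to\hat B(M\setminus i)$ is the tropical modification along a piecewise-linear $\phi$ on $\hat B(M\setminus i)$ whose domains of linearity are the cones $\hat\sigma_{\mathcal F}$ and which is non-linear precisely across the walls indexed by the flats $F$ of $M\setminus i$ that are not flats of $M$ but satisfy $F\cup\{i\}\in\mathcal L(M)$. These are exactly the flats appearing in Definition \eqref{def:yi}, so comparing $\phi$ with the Courant function on $B(M\setminus i)$ whose class is the corresponding sum of $y_F$ lets one read off that $\phi$ represents $y_i$, the sign in the definition being fixed so that $\mathrm{div}(\phi)$ carries weight $+1$. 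I would then verify $\mathrm{div}(\phi)=1_{M/i}$ by a local balancing computation: on each ridge $\sigma_{\mathcal G}$ of $B(M\setminus i)$ lying in $B(M/i)$ the balancing defect of $\phi$ produces weight $1$, while ridges not in $B(M/i)$ receive weight $0$. This reproduces the stated identity $\mathrm{div}_{B\setminus i}(\phi)=B(M/i)$ at the level of cocycles, giving $y_i\cap 1_{M\setminus i}=1_{M/i}$.

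For claim (B), apply the module structure to rewrite $y_i\cdot\iota(z)\cap 1_{M\setminus i}=\iota(z)\cdot\left(y_i\cap 1_{M\setminus i}\right)=\iota(z)\cdot 1_{M/i}$. Since $1_{M/i}$ is supported on the subfan $B(M/i)$, the product $\iota(z)\cdot 1_{M/i}$ depends only on the restriction of the cocycle $\iota(z)$ to $B(M/i)$; and because $\iota$ renames $z_F\mapsto y_F$ compatibly with the inclusion $B(M/i)\hookrightarrow B(M\setminus i)$, this restriction is precisely the Courant cocycle representing $z$. Hence $\iota(z)\cdot 1_{M/i}$ is the inclusion into $B(M\setminus i)$ of the cycle $z\cap 1_{M/i}$, which by the characterization of $j_*$ above equals $j_*(z\cap 1_{M/i})$. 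Combining with (A) proves the formula, and the passage between the affine fans $\hat B$ and the projective fans $B$ is handled by the isomorphism $MW_*(\hat B(-))\cong MW_*(B(-))$ of Remark \ref{rem:PD}.

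The main obstacle I expect is claim (A): pinning down the modification function $\phi$ and proving $\mathrm{div}(\phi)=1_{M/i}$ with the correct unit weights and sign. This requires careful bookkeeping of which flats of $M\setminus i$ fail to be flats of $M$, how these index the walls of the modification, and a ridge-by-ridge balancing computation. By contrast, the compatibility of $\iota$ with restriction to $B(M/i)$ in claim (B) is comparatively formal once (A) is in hand.
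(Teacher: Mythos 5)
Your proposal is correct and follows essentially the same route as the paper: both identify the modification function $\phi$ via Proposition 3.10 of \cite{FR10}, observe that its values on rays ($-1$ exactly on the rays $e_F$ with $F \notin \mathcal{L}(M)$ but $F \cup \{i\} \in \mathcal{L}(M)$, and $0$ on rays of flats of $M$) make it the Courant function representing $y_i$, and then conclude by the module structure, which the paper dismisses as a straightforward check. The only difference is that the ridge-by-ridge balancing verification of $\mathrm{div}(\phi) = 1_{M/i}$, which you flag as the main obstacle, is not needed: that identity is already supplied by the cited results (Proposition 2.22 in \cite{S13} and Proposition 3.10 in \cite{FR10}), so the paper takes it as given.
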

\begin{proof}
    By Proposition 3.10 in \cite{FR10}, $\phi$ is given by 
    \begin{align*}
        \phi(v_{\hat{\sigma}_F}) = & \rk_{M / i}(F) -\rk_{M \setminus i}(F) \\
        = & \rk_M(F \cup i) - \rk_M(i) - \rk_M(F) \\
        = & \rk_M(F \cup i) - \rk_M(F) - 1.
    \end{align*}
    If $F$ is not a flat in $M$, but $F \cup i$ is, then $\rk_M(F \cup i) = \rk_M(F)$, thus
    \begin{align*}
        \phi(v_{\hat{\sigma}_F}) = & \rk_M(F \cup i) - \rk_M(F) - 1 \\
        = & \rk_M(F) - \rk_M(F) - 1 \\
        = & -1.
    \end{align*}
    If $F$ is a flat in $M$, then $\rk_M(F \cup i) = \rk_M(F) + 1$, thus
    \begin{align*}
        \phi(v_{\hat{\sigma}_F}) = & \rk_M(F \cup i) - \rk_M(F) - 1 \\
        = & \rk_M(F) + 1 - \rk_M(F) - 1 \\
        = & 0.
    \end{align*}
    Therefore, under the isomorphism $MW_*(\hat{B}(M)) \cong MW_*(B(M))$, $\phi \cdot \_ = y_i \cap \_$, where $y_i$ is defined in \eqref{def:yi}. Finally, it is straightforward check to see that $y_i \cdot \iota(z) \cap 1_{M \setminus i} = j_*(z \cap 1_{M / i})$.
\end{proof}

\begin{lemma}\label{lemma:e=d}
    For cocycles $y \in A^*(M\setminus i )$, $\bar{\delta}^*(y) \cap 1_M = \delta^*(y \cap 1_{M \setminus i})$.
\end{lemma}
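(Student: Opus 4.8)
The plan is to deduce the identity from the projection formula for the intersection product of cocycles with cycles in the Gross--Shokrieh formalism \cite{GS19}, together with two compatibility checks: that the combinatorially defined map $\bar{\delta}^*$ coincides with the cocycle pullback induced by the fan morphism $\delta$, and that the cycle pullback $\delta^*$ fixes the fundamental class. Under the identification $A^*(\Sigma) \cong C^*(\Sigma)$, Poincar\'e duality is cap product with the fundamental cycle, so $y \cap 1_{M \setminus i}$ is the intersection product $y \cdot 1_{M \setminus i}$ of the cocycle $y$ with the top-dimensional cycle $1_{M \setminus i} = [B(M \setminus i)]$. Applying the projection formula of \cite{GS19} to $\delta \colon B(M) \to B(M \setminus i)$ gives
\[
\delta^*(y \cap 1_{M \setminus i}) = \delta^*(y \cdot 1_{M \setminus i}) = \delta^*(y) \cdot \delta^*(1_{M \setminus i}),
\]
where the first $\delta^*$ on the right is the cocycle pullback and the second is the cycle pullback. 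It then suffices to identify $\delta^*(y)$ with $\bar{\delta}^*(y)$ and $\delta^*(1_{M \setminus i})$ with $1_M$.

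For the first identification I would work in the piecewise-polynomial (Courant function) model of $A^*$, in which the cocycle pullback is literally precomposition with $\delta$. It is cleanest to run this on the affine fans, where $\delta$ is induced by the coordinate projection $\mathbb{R}^E \to \mathbb{R}^{E \setminus i}$ and hence sends the ray generator $e_F$ of a flat $F$ of $M$ to $e_{F \setminus i}$. Thus for the Courant function $\phi_G$ attached to a flat $G$ of $M \setminus i$ one computes $(\phi_G \circ \delta)(e_F) = \phi_G(e_{F \setminus i})$, which is $1$ if $F \setminus i = G$ and $0$ otherwise. Both $\phi_G \circ \delta$ and $\sum_{F \setminus i = G} \phi_F$ are piecewise linear on $B(M)$, linear on each cone, and they agree on every ray, so they coincide; this is exactly the defining formula $\bar{\delta}^*(y_G) = \sum_{F \setminus i = G} x_F$. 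Since the cocycle pullback is a ring homomorphism determined by its values on generators, this yields $\delta^* = \bar{\delta}^*$ on all of $A^*(M \setminus i)$.

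For the second identification I would observe that $\delta$ maps the top-dimensional cones of $B(M)$ onto those of $B(M \setminus i)$ and is, by Proposition 3.10 of \cite{FR10}, a tropical modification. The cycle pullback along a tropical modification fixes the fundamental class (modifications have degree one), so $\delta^*(1_{M \setminus i}) = 1_M$. Chaining the equalities,
\[
\delta^*(y \cap 1_{M \setminus i}) = \delta^*(y) \cdot \delta^*(1_{M \setminus i}) = \bar{\delta}^*(y) \cdot 1_M = \bar{\delta}^*(y) \cap 1_M,
\]
which is the claim.

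I expect the main obstacle to be bookkeeping with conventions rather than any deep point. One must handle the affine versus projective Bergman fans consistently, using the identifications $MW_*(\hat{B}) \cong MW_*(B)$ of Remark \ref{rem:PD} to transport the computation of the previous paragraph back to $B(M)$; pin down the sign convention relating $x_F$ to its Courant function so that it is the same on both sides of the equality; and confirm that the cycle pullback of \cite{GS19} (agreeing with \cite{S13} for modifications) genuinely sends $[B(M \setminus i)]$ to $[B(M)]$. None of these is conceptually hard, but the argument only reads cleanly once the affine model is fixed as the place where the ray--flat correspondence and the projection $\mathbb{R}^E \to \mathbb{R}^{E \setminus i}$ are most transparent.
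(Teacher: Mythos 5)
Your proposal is correct and takes essentially the same route as the paper: the paper's proof also works in the Gross--Shokrieh cocycle formalism, reducing the identity to the statement that the cocycle pullback along $\delta$ (precomposition of Courant functions) agrees with $\bar{\delta}^*$ on the degree-one generators, which it verifies exactly as you do by evaluating on ray generators via $\delta(e_F) = \delta(e_{F \cup i}) = \bar{e}_{F}$. The only difference is expository: you state explicitly the projection formula of \cite{GS19} and the identity $\delta^*(1_{M \setminus i}) = 1_M$ for the tropical modification, which the paper leaves implicit in the discussion preceding the lemma.
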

\begin{proof}
    Since these are ring homomorphisms and both rings are generated in degree 1, we need only check this holds for Courant functions. Courant functions are uniquely determined by their values on rays; thus the claim simplifies to the statement that for all flats $F \in \hat{\mathcal{L}}(M \setminus i)$, if $F$ is a flat in $M$, then $\delta^{-1}(\sigma_F) = \sigma_{F} \cup \sigma_{F \cup i}$ and $x_F(\bar{e}_F) = x_F(\delta(e_{F\cup i})) = x_F(\delta(e_F))$, and if $F$ is not a flat of $M$ then $\delta^{-1}(\sigma_F) = \sigma_{F \cup i}$ and $x_F(\bar{e}_F) = x_F(\delta(e_{F\cup i}))$. From the definition of $\delta$, it is clear that $\delta(e_{F\cup i}) = \delta(e_F) = \bar{e}_F$, thus proving the claim. 
\end{proof}

\begin{theorem}
    For a matroid $M$, $\st(M) \cap 1_M = \csm(M)$.
\end{theorem}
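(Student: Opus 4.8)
The plan is to prove the statement by induction on $|E|$, the number of elements of $M$, reducing the general case to the $\mathbb{C}$-realizable case (Theorem \ref{thm:csm_realizable}) via the contraction-deletion formula for $\st$ (Theorem \ref{prop:stair_cd}) together with the analogous contraction-deletion formula satisfied by $\csm$, proved by López de Medrano, Rincón and Shaw in \cite{LdMRS19}. The two translation lemmas proved just above — Lemma \ref{lemma:e=d} and the lemma identifying $j_*$ — are precisely what converts the algebraic $\st$-recursion into the geometric $\csm$-recursion after capping with $1_M$.

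First I would dispose of the degenerate cases, which also furnish the base of the induction. If $M$ has a loop, then $\st(M) = 0$ by Definition \ref{def:staircase}, while $\csm(M) = 0$ by the convention of \cite{LdMRS19}, so both sides vanish. If $M$ is loopless and every element is a coloop, then $M$ is the free matroid, which is realizable over $\mathbb{C}$; combining Theorem \ref{thm:csm_realizable} with the realizable correspondence of \cite{LdMRS19} (identifying $\csm(M)$ with $c_{SM}(\mathcal{W}\setminus\mathcal{A})$ under $MW_*(B(M)) \cong A_*(\mathcal{W})$) and Poincaré duality (Remark \ref{rem:PD}) gives $\st(M) \cap 1_M = \csm(M)$ directly. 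More generally, every realizable $M$ is handled this way, so the realizable case serves as a robust base case.

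For the inductive step, assume $M$ is loopless and not free, so it has a non-coloop element $i$, and assume the statement for the smaller matroids $M\setminus i$ and $M/i$. Capping Theorem \ref{prop:stair_cd} with $1_M$ gives
\begin{align*}
\st(M) \cap 1_M = \bar\delta^*(\st(M\setminus i)) \cap 1_M - \bar\delta^*\bigl(y_i \cdot \iota(\st(M/i))\bigr) \cap 1_M.
\end{align*}
Lemma \ref{lemma:e=d} rewrites both terms as pullbacks: the first becomes $\delta^*(\st(M\setminus i) \cap 1_{M\setminus i})$, and the second becomes $\delta^*\bigl(y_i \cdot \iota(\st(M/i)) \cap 1_{M\setminus i}\bigr)$, which by the $j_*$-lemma equals $\delta^*\bigl(j_*(\st(M/i) \cap 1_{M/i})\bigr)$. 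Applying the induction hypothesis $\st(M\setminus i) \cap 1_{M\setminus i} = \csm(M\setminus i)$ and $\st(M/i) \cap 1_{M/i} = \csm(M/i)$ then yields
\begin{align*}
\st(M) \cap 1_M = \delta^*(\csm(M\setminus i)) - \delta^*\bigl(j_*(\csm(M/i))\bigr).
\end{align*}

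The main obstacle is the final comparison: recognizing the right-hand side above as exactly the contraction-deletion expression for $\csm(M)$ from \cite{LdMRS19}. This requires matching, map for map, the pullback $\delta^*$ and the pushforward $j_*$ appearing there with the operators produced by capping the $\st$-recursion, and confirming that the sign and degree conventions agree — which is the entire reason Lemma \ref{lemma:e=d} and the $j_*$-lemma were set up to translate $\bar\delta^*$, multiplication by $y_i$, and $\iota$ into the cycle-level maps $\delta^*$ and $j_*$. A secondary point, needed for the induction to terminate, is the elementary observation that any loopless matroid that is not free contains a non-coloop element, so that the recursion bottoms out at the realizable base case.
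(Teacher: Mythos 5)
Your proposal is correct and follows essentially the same route as the paper's own proof: induction on $|E|$ with the loop/all-coloop (realizable) cases as the base, capping the contraction-deletion formula of Theorem \ref{prop:stair_cd} with $1_M$, translating via Lemma \ref{lemma:e=d} and the $j_*$-lemma, and concluding with the contraction-deletion formula for $\csm$ (Theorem 5.4 in \cite{LdMRS19}). The ``main obstacle'' you flag at the end is precisely that cited theorem of L\'opez de Medrano, Rinc\'on and Shaw, so no gap remains.
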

\begin{proof}
    We proceed by induction. Note that for the loop matroid $M_0$, i.e. the matroid on one element whose only element is a loop, $\csm(M_0) = 0 = 0 \cap 1_{M_0} = \st(M_0) \cap 1_{M_0}$ and for the isthmus $M_1$, i.e. the matroid on one element whose only element is independent, $\csm(M_1) = 1_{M_1}$. Now, assume that for all matroids $M'$ whose ground set consists of $n$ elements $\st(M') \cap 1_{M'} = \csm_k(M')$ and let $M$ be a loopless matroid whose ground set consists of $n + 1$ elements. If all elements of the ground set of $M$ are coloops, then $M$ is representable over $\mathbb{C}$ and thus, by Theorem \ref{thm:csm_realizable}, the conclusion follows. Now assume there exists $i \in M$ that is not a coloop. Then, by Theorem \ref{prop:stair_cd}, $\st(M) = \bar{\delta}^* \st(M \setminus i) - \bar{\delta}^* \st(M / i)$. Then, the ground sets of $M \setminus i$ and $M / i$ contain $n$ elements. Thus, by Lemma \ref{lemma:e=d} and Theorem 5.4 in \cite{LdMRS19},
    \begin{align*}
        \st(M) \cap 1_M = & \\
        = & \bar{\delta}^* (\st(M \setminus i)) \cap 1_M - \bar{\delta}^* (y_i \cdot \iota (\st(M / i))) \cap 1_{M} \\
        = & \delta^* (\st(M \setminus i) \cap 1_{M \setminus i}) - \delta^*( y_i \cdot \iota (\st(M / i)) \cap 1_{M \setminus i})  \\
        = & \delta^*\csm(M \setminus i) - \delta^* j_*(\st(M / i) \cap 1_{M / i}) \\
        = & \delta^*\csm(M \setminus i) - \delta^* j_*\csm(M / i) \\
        = & \csm(M).
    \end{align*}
\end{proof}

Thus, we obtain as a consequence of the computation in Theorem \ref{thm:csm_realizable} and Theorem \ref{thm:st=csm}:

\begin{corollary}\label{cor:conjecture_true}
    The formula of Fife and Rinc\'on holds for all matroids and the proofs of Corollary 7.0.1 and Proposition 7.0.4 in \cite{Man22} are valid.
\end{corollary}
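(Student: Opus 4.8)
The plan is to prove the identity by induction on the number of elements of the ground set $E$, leveraging two contraction-deletion recursions: the one for $\st$ established in Theorem \ref{prop:stair_cd} and the one satisfied by $\csm$ due to L\'opez de Medrano, Rinc\'on and Shaw (Theorem 5.4 in \cite{LdMRS19}). The strategy is to show that these two recursions have identical shape once both classes are capped with the appropriate fundamental Minkowski weights, so that matching base cases forces the classes to agree.

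For the base cases I would treat the two one-element matroids directly. For the loop matroid $M_0$, both sides vanish: $\st(M_0) = 0$ by definition and $\csm(M_0) = 0$. For the coloop matroid $M_1$, the product defining $\st$ is empty, so $\st(M_1) \cap 1_{M_1} = 1_{M_1} = \csm(M_1)$. More generally, any matroid all of whose elements are coloops is the free matroid, which is realizable over $\mathbb{C}$; for such matroids the identity is exactly the content of Theorem \ref{thm:csm_realizable} together with the identification $A_*(M) \cong A_*(\mathcal{W})$. This disposes of the ``fully coloop'' possibility in the inductive step at once.

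For the inductive step, I would assume the identity for all matroids on $n$ elements (including those with loops, for which both sides are zero) and take a loopless $M$ on $n+1$ elements possessing a non-coloop $i$, the only remaining case. Applying Theorem \ref{prop:stair_cd} and capping with $1_M$, I would rewrite $\bar{\delta}^*(\st(M\setminus i)) \cap 1_M$ as $\delta^*(\st(M\setminus i) \cap 1_{M\setminus i})$ using Lemma \ref{lemma:e=d}, handle the second term by the same lemma, and then convert $y_i \cdot \iota(\st(M/i)) \cap 1_{M\setminus i}$ into $j_*(\st(M/i) \cap 1_{M/i})$ via the lemma identifying $j_*$ with multiplication by $y_i$. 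Since $M\setminus i$ and $M/i$ both have $n$ elements, the inductive hypothesis replaces $\st(M\setminus i) \cap 1_{M\setminus i}$ by $\csm(M\setminus i)$ and $\st(M/i) \cap 1_{M/i}$ by $\csm(M/i)$, leaving $\delta^*\csm(M\setminus i) - \delta^* j_* \csm(M/i)$, which is precisely $\csm(M)$ by the LdMRS recursion.

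The main obstacle I anticipate is not any single computation but ensuring the two recursions are genuinely compatible: that the deletion pullback $\bar{\delta}^*$ intertwines with capping (the content of Lemma \ref{lemma:e=d}), and that multiplication by $y_i$ on $A^*(M\setminus i)$ corresponds exactly to the inclusion $j_*$ of Minkowski weights with the correct sign, so that the signed difference in Theorem \ref{prop:stair_cd} lines up termwise with the signed difference in the $\csm$ recursion. A secondary point demanding care is that $M/i$ may acquire loops, from elements parallel to $i$, even though $M$ is loopless; the induction must therefore be stated for all matroids on $n$ elements, and the vanishing of both $\st$ and $\csm$ on loop-containing matroids is exactly what makes this harmless.
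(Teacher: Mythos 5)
Your proposal is correct and takes essentially the same route as the paper: the corollary is derived there from Theorem \ref{thm:st=csm}, whose proof is exactly your induction --- base cases the loop and isthmus matroids, the all-coloop case dispatched by Theorem \ref{thm:csm_realizable}, and the inductive step combining Theorem \ref{prop:stair_cd}, Lemma \ref{lemma:e=d}, the identification of $j_*$ with multiplication by $y_i$, and the contraction-deletion recursion of Theorem 5.4 in \cite{LdMRS19}. Your explicit observation that $M/i$ may acquire loops (so the inductive hypothesis must cover all $n$-element matroids, with both classes vanishing in the loop case) is handled implicitly by the paper's phrasing of the hypothesis, and is the only point where you are more careful than the source.
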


\subsection{Connection to tautological classes of matroids}\label{sec4.3}

For an arbitrary set $E$, with $|E| = n$, the permutohedral variety $X_E$ is the (maximal) wonderful model of the hyperplane arrangement consisting of the coordinate hyperplanes in $\PP^{n-1}$. The matroid associated to this hyperplane arrangement is the uniform matroid $U_{n,n}$ on $E$, for which all subsets are flats. Hence, its Chow ring $A^*(X_E)$ is generated by variables $z_F$ for all subsets $\emptyset \subsetneq F \subset E$

For an arbitrary matroid $M$ on the same set $E$, the authors of \cite{BEST23} define a tautological subbundle $\mathcal{S}_M$ and quotient bundle $\mathcal{Q}_M$ over $X_E$, with Chern classes valued in $A^*(X_E)$.

If we denote by $\iota:A^*(M) \rightarrow A^*(X_E)$ the map $\iota(x_F) = z_F$, Theorem 7.6 in \cite{BEST23} says that there is a canonical embedding $A^*(M) \rightarrow A^*(X_E)$ as the ideal generated by $c_{|E| - \rk(M)}(\mathcal{Q}_M)$, given by $c_{|E| - \rk(M)}(\mathcal{Q}_M) \cdot \iota(\_)$.

Under this canonical identification, in Theorem 8.4 in \cite{BEST23}, the authors prove that the Poincar\'e dual of the CSM class is given by $c(\mathcal{S}_M) \cdot c_{|E|- \rk(M)}(\mathcal{Q}_M)$ and in Appendix III.1, they give the Chern root decomposition for $c(\mathcal{S}_M)$, which yields the same staircase form $\iota(\st(M))$ after removing the terms which are annihilated by $c_{|E|- \rk(M)}(\mathcal{Q}_M)$.

\begin{remark}
    The map $c(\mathcal{S}_M) \cdot \_:A^*(X_E) \to A^*(X_E)$ equals the map $\iota (\st(M)) \cdot \_:A^*(X_E) \to A^*(X_E)$, when restricted to the submodule $c_{|E|-\rk(M)}(Q_M) A^*(X_E)$.
\end{remark}

\section{Proof of Theorem 4.1}\label{sec5}

\begin{proof}
We partition the nonempty flats of $M$ according to the flowchart:
\[\begin{tikzcd}
	{\text{is $i \in F$?}} \\
	{\text{is $F\cup \{i\}$ a flat?}} && {\text{is $F\setminus\{i\}$ a flat (or $\emptyset$)?}} \\
	\textcolor{rgb,255:red,118;green,30;blue,30}{\mathcal{T}} & \textcolor{rgb,255:red,139;green,139;blue,35}{\mathcal{S}} & \textcolor{rgb,255:red,20;green,82;blue,20}{\mathcal{U}} & \textcolor{rgb,255:red,50;green,91;blue,179}{\mathcal{V}}
	\arrow["{\text{no}}"', from=1-1, to=2-1]
	\arrow["{\text{yes}}", from=1-1, to=2-3]
	\arrow["{\text{no}}", from=2-1, to=3-1]
	\arrow["{\text{yes}}", from=2-1, to=3-2]
	\arrow["{\text{yes}}", from=2-3, to=3-3]
	\arrow["{\text{no}}", from=2-3, to=3-4]
\end{tikzcd}\]

The sets $\mathcal{T}, \mathcal{S}, \mathcal{U}, \mathcal{V}$ are disjoint and partition $\mathcal{L}(M)$. 

Denote also $\mathcal{U}\setminus i := \{F\setminus \{i\} | F \in \mathcal{U} \}$ and 
$\mathcal{V}\setminus i := \{F\setminus \{i\} | F \in \mathcal{V}\}$.

Note the following reasons for this partition:
\begin{itemize}
    \item $\mathcal{U} \cup \mathcal{V}$ consists of all flats containing the deleted element $i$. Among these, the ones in $\mathcal{U}$ are the flats that decrease in rank when $i$ is deleted and the ones in $\mathcal{V}$ are the flats that remain the same rank when $i$ is deleted.

    \item $S \cup \mathcal{V}\setminus i = \mathcal{L}(M/i)$ are the flats of the contraction $M/i$.
    
    \item $\mathcal{T} \cup \mathcal{S} \cup \mathcal{V} \setminus i = \mathcal{L}(M \setminus i)$ are the flats of the deletion $M\setminus i$.
    
    \item $\mathcal{U}\setminus i = \mathcal{S} \cup \{\emptyset\}$.
\end{itemize}
This partition determines the behavior of the divisors under the map $\bar{\delta}^*$: 
\begin{itemize}
    \item If $F \in \mathcal{T}$, then $\bar{\delta}^*(y_F) = x_F$. 
    \item If $F \in \mathcal{S}$, then $\bar{\delta}^*(y_F) = x_F + x_{F \cup \{i\}}$.
    \item If $F \in \mathcal{V} \setminus i$, then $\bar{\delta}^*(y_{F}) = x_{F \cup i}$.
    \item The pullback of the special element $y_i = -\sum_{F \in \mathcal{V}\setminus i} y_F$ is
    $$\bar{\delta}^*(y_i) = \bar{\delta}^*\left(-\sum_{F \in \mathcal{V}\setminus i} y_F\right) = -\sum_{F \in \mathcal{V}} x_{F} = \sum_{F \in \mathcal{U}} x_{F}$$.
\end{itemize}
Under the partition, the resulting quotient lattice structure is
\[\begin{tikzcd}
	& \textcolor{rgb,255:red,50;green,91;blue,179}{\mathcal{V}} \\
	\textcolor{rgb,255:red,118;green,30;blue,30}{\mathcal{T}} && \textcolor{rgb,255:red,20;green,82;blue,20}{\mathcal{U}} \\
	& \textcolor{rgb,255:red,139;green,139;blue,35}{\mathcal{S} \cup \{\emptyset\}}
	\arrow[from=2-1, to=1-2]
	\arrow[from=2-3, to=1-2]
	\arrow[from=3-2, to=2-1]
	\arrow[from=3-2, to=2-3]
\end{tikzcd}\]

For ranks $r=1, \dots, d+1$, let 
\begin{align*}
    t_r := \sum_{\substack{F \in \mathcal{T} \\ \rk(F) = r}} x_F, \quad &v_r := \sum_{\substack{F \in \mathcal{V} \\ \rk(F) = r}} x_F,\\
    s_r := \sum_{\substack{F \in \mathcal{S} \\ \rk(F) = r}} x_F, \quad & u_r := \sum_{\substack{F \in \mathcal{U} \\ \rk(F) = r}} x_F.
\end{align*}
One of $\mathcal{T}, \mathcal{S}, \mathcal{U}$ and $\mathcal{V}$ may be empty in some rank $r$, in which case the sum is zero. For short hand, we also group indices with the same rank $(t + s)_r := t_r + s_r$, etc.
In particular, note that 
\begin{align*}
    \bar{\delta}^*\left( \sum_{\substack{F \in \mathcal{S} \\ \rk(F) = j}} y_F \right) & = s_j + u_{j+1} , &
    \bar{\delta}^*\left( \sum_{\substack{F \in \mathcal{T} \\ \rk(F) = j}} y_F \right) &= t_j \\
    \bar{\delta}^*\left( \sum_{\substack{F \in \mathcal{V} \setminus i \\ \rk(F) = j}} y_F \right) &= v_j
\end{align*}

Using this notation,
\begin{align*}
    \st(M) &= \prod_{r=1}^{d+1} \left(1 - \sum_{j=r}^{d+1}(s + t + u + v)_j \right)
\end{align*}
and
\begin{align*}
    \bar{\delta}^*(\st(M\setminus i)) & = \prod_{r=1}^{d+1} \left(1 - \bar{\delta}^* \left( \sum_{\substack{F \in \mathcal{L}(M \setminus i) \\ \rk(F) \geq r}} y_F \right) \right) \\
    & = \prod_{r=1}^{d+1} \left(1 - \bar{\delta}^* \left( \sum_{\substack{F \in \mathcal{S} \\ \rk(F) \geq r}} y_F + \sum_{\substack{F \in \mathcal{T} \\ \rk(F) \geq r}} y_F + \sum_{\substack{F \in \mathcal{V} \setminus i \\ \rk(F) \geq r}} y_F\right) \right)\\
    & = \prod_{r=1}^{d+1} \left(1 - \sum_{j \geq r} (s_j + u_{j+1}) + \sum_{j \geq r} t_j + \sum_{j \geq r} v_j \right)\\
    &= \prod_{r=1}^{d+1} \left(1 - (s + t + v)_r - \sum_{j=r+1}^{d+1}(s + t + u + v)_j\right)
\end{align*}

Note that $u_r$ is omitted from the $r$-th factor. We compute the difference $\st(M) - \bar{\delta}^*(\st(M\setminus i))$ and show it is equal to the additive inverse of the pullback to $A^*(M)$ of the staircase of $M / i$; concretely, denoting the rank function of $M$ by $\rk$, the rank function of $M / i$ by $\rk'$ and noting that for $F \in \mathcal{V} \setminus i$, $\rk(F) = \rk'(F)+1$,
\begin{align*}
    \bar{\delta}^* (y_i \cdot \iota(\st(M/i))) &= \bar{\delta}^* (y_i) \cdot \bar{\delta}^*\iota(\st(M/i)))\\
    &= \left(\sum_{r=1}^{d+1} u_i \right)\bar{\delta}^*\iota\left( \prod_{r=1}^{d}\left( 1 - \sum_{\substack{F \in \mathcal{L}(M / i) \\ \rk'(F) \geq r}} z_F \right) \right) \\
    &= \left(\sum_{r=1}^{d+1} u_i \right)\prod_{r=1}^{d}\left( 1 - \bar{\delta}^*\iota\left(\sum_{\substack{F \in \mathcal{S} \\ \rk(F) \geq r}} z_F\right) - \bar{\delta}^*\iota\left(\sum_{\substack{F \in \mathcal{V} \setminus i \\ \rk(F) \geq r + 1}} z_F \right)\right) \\
    &= \left(\sum_{r=1}^{d+1} u_i \right)\prod_{r=1}^{d}\left( 1 - \bar{\delta}^*\iota\left(\sum_{\substack{F \in \mathcal{S} \\ \rk(F) \geq r}} z_F\right) - \bar{\delta}^*\iota\left(\sum_{\substack{F \in \mathcal{V} \setminus i \\ \rk(F) \geq r + 1}} z_F \right)\right)\\
    &= \left(\sum_{r=1}^{d+1} u_i \right)\prod_{r=1}^{d}\left( 1 - \bar{\delta}^*\left(\sum_{\substack{F \in \mathcal{S} \\ \rk(F) \geq r}} y_F\right) - \bar{\delta}^*\left(\sum_{\substack{F \in \mathcal{V} \setminus i \\ \rk(F) \geq r + 1}} y_F \right)\right) \\
    &= \left(\sum_{r=1}^{d+1} u_i \right) \prod_{r=1}^{d}\left( 1 - \sum_{j \geq r} (s_j + u_{j+1}) +\sum_{j \geq r} v_j  \right) \\
    & =\left(\sum_{r=1}^{d+1} u_i \right) \prod_{r=1}^{d}\left( 1 - s_r - \sum_{j=r+1}^{d+1} (s + u + v)_{j} \right)
\end{align*}

The difference is:
\begin{align*}
    &\st(M) - \bar{\delta}^*(\st(M\setminus i)) =\\
    & - \sum_{k=1}^{d+1} \prod_{r = 1}^{k-1}\left(1 - (t+s+v)_r - \sum_{j=r+1}^{d+1} (t+s+u+v)_j \right) u_k \prod_{r=k+1}^{d+1}\left( 1 - \sum_{j=r}^{d+1}(t+s+u+v)_j\right)
\end{align*}

Note in the $k$-th term is like $\st(M)$ except the $k$-th factor is replaced by $-u_k$ and for each $r < k$, $u_{k}$ is omitted from the $r$-th factors. Since all pairs of flats in $\mathcal{T}$ and $\mathcal{U}$ are not comparable, $t_j u_l = 0$ for all $j$ and $l$. Since each term has some $u_k$ as a factor, all `$t$'s may be removed.

\begin{align*}
    &\st(M) - \bar{\delta}^*(\st(M\setminus i)) =\\
    & - \sum_{k=1}^{d+1} \prod_{r = 1}^{k-1}\left(1 - (s+v)_r - \sum_{j=r+1}^{d+1} (s+u+v)_j \right) u_k \prod_{r=k+1}^{d+1}\left( 1 - \sum_{j=r}^{d+1}(s+u+v)_j\right)
\end{align*}

Also $u_k s_l = 0$ if $k \leq l$. Thus:
\begin{align*}
    &\st(M) - \bar{\delta}^*(\st(M\setminus i)) =\\
    & - \sum_{k=1}^{d+1} \prod_{r = 1}^{k-1}\left(1 - (s+v)_r - \sum_{j=r+1}^{d+1} (s+u+v)_j \right) u_k \prod_{r=k+1}^{d+1}\left( 1 -s_{r-1} - \sum_{j=r}^{d+1}(s+u+v)_j\right)
\end{align*}

Also $u_k v_l = 0 $ if $l < k$. Then, for each term, index $k$, for all $l < k$ remove $v_{l}$ from the first $l$-th factors (before each $-u_k$).
\begin{align*}
    &\st(M) - \bar{\delta}^*(\st(M\setminus i)) \\
    =& - \sum_{k=1}^{d+1} \prod_{r = 1}^{k-1}\left(1 - s_r - \sum_{j=r+1}^{d+1} (s+u+v)_j \right) u_k \prod_{r=k+1}^{d+1}\left( 1 -s_{r-1} - \sum_{j=r}^{d+1}(s+u+v)_j\right)\\
    =& - \sum_{k=1}^{d+1} u_k \prod_{r = 1}^{k-1}\left(1 - s_r - \sum_{j=r+1}^{d+1} (s+u+v)_j \right)  \prod_{r=k}^{d}\left( 1 - s_r - \sum_{j=r+1}^{d+1}(s+u+v)_j\right) \\
    =& - \left( \sum_{k=1}^{d+1} u_k \right) \prod_{r = 1}^{d}\left(1 - s_r - \sum_{j=r+1}^{d+1} (s+u+v)_j \right)\\
    =& - \bar{\delta}^*(y_i \cdot \iota(\st(M / i))),
\end{align*}
which proves the claim.
\end{proof}

\bibliographystyle{alpha}
\bibliography{main.bib}

\end{document}